\DeclareMathOperator{\aut}{Aut}
\DeclareMathOperator{\cay}{Cay}
\DeclareMathOperator{\cyc}{Cyc}
\DeclareMathOperator{\GL}{GL}
\DeclareMathOperator{\Inn}{Inn}
\DeclareMathOperator{\orb}{Orb}
\DeclareMathOperator{\rk}{rk}
\DeclareMathOperator{\Span}{Span}
\DeclareMathOperator{\sym}{Sym}
\DeclareMathOperator{\rad}{rad}
\DeclareMathOperator{\GCD}{GCD}
\DeclareMathOperator{\Cla}{Cla}
\def\@seccntformat#1{\csname the#1\endcsname. } 
\def\@biblabel#1{#1.}
\title{On generalized Schur groups}
\author{Grigory Ryabov}
\address{Sobolev Institute of Mathematics, Novosibirsk, Russia}
\address{Novosibirsk State Technical University, Novosibirsk, Russia}
\email{gric2ryabov@gmail.com}
\thanks{The work is supported by Russian Scientific Fund (project No.~22-71-00021)}
\date{}
\newtheorem{prop}{Proposition}[section]
\newtheorem{lemm}[prop]{Lemma}
\newtheorem{theo}[prop]{Theorem}
\newtheorem{corl}{Corollary}[section]
\theoremstyle{definition}
\newtheorem*{rem1}{Remark 1}
\newtheorem*{rem2}{Remark 2}
\newtheorem*{prob}{Problem}
\begin{document}

\vspace{\baselineskip}
\vspace{\baselineskip}

\vspace{\baselineskip}

\vspace{\baselineskip}

\begin{abstract}
An $S$-ring (Schur ring) is called \emph{central} if it is contained in the center of the group ring. We introduce the notion of a \emph{generalized Schur group}, i.e. such finite group that all central $S$-rings over this group are schurian. It generalizes in a natural way the notion of a \emph{Schur group} and they are equivalent for abelian groups. We establish basic properties and provide infinite families of nonabelian generalized Schur groups.
\\
\\
\textbf{Keywords}: Schur rings, Schur groups, $p$-groups, Camina groups, dihedral groups.
\\
\\
\textbf{MSC}: 05E30, 20B25. 
\end{abstract}

\maketitle

\section{Introduction}

Let $G$ be a finite group. A subring of the group ring $\mathbb{Z}G$ is called an \emph{$S$-ring} (a \emph{Schur ring}) over $G$ if it is a free $\mathbb{Z}$-module spanned on a special partition of $G$ (exact definitions are given in Section~$2$). The notion of  $S$-ring goes back to Schur and Wielandt. They used ``the S-ring method'' to study a permutation group having a regular subgroup~\cite{Schur,Wi}. The $S$-rings is a powerful tool for studying problems of the algebraic combinatorics, in particular, the Cayley graph isomorphism problem. For more details, we refer the readers to the survey~\cite{MP}.

In~\cite{Hen}, it was proved that $S$-rings are closely related to \emph{supercharacters} introduced in~\cite{DI} for studying representations of algebraic groups. Namely, the supercharacters of the group~$G$ are in one-to-one correspondence with the $S$-rings contained in the center $Z(\mathbb{Z}G)$ of the group ring (see~\cite[Proposition~2.4]{Hen}). Following~\cite{CMP}, we call such $S$-rings \emph{central}. Observe that if $G$ is abelian, then $Z(\mathbb{Z}G)=\mathbb{Z}G$ and hence every $S$-ring over $G$ is central. 

The most of the results in the theory of $S$-rings are concerned with $S$-rings over abelian groups, whereas the general theory of $S$-rings over nonabelian groups remains almost undeveloped. One of the possible ways to study $S$-rings over nonabelian groups and extend the results from abelian case to the general one is to restrict ourselves to central $S$-rings. The classical Schur theorem on multipliers for $S$-rings over an abelian group~\cite[Theorem~23.9]{Wi} was extended to central $S$-rings over an arbitrary group in~\cite[Theorem~1.2]{CMP}. In the same paper, the notion of a \emph{generalized B-group}, i.e. such finite group whose all proper central $S$-rings are imprimitive, was introduced. It generalizes the classical notion of a \emph{B-group} (\emph{Burnside group}) (see~\cite[Section~25]{Wi}). Central $S$-rings over projective special linear groups were studied in~\cite{HumW}. In fact, some results on the automorphism groups of central $S$-rings over almost simple groups were obtained in~\cite{GGRV,PV2}.

One of the crucial properties of $S$-rings is the \emph{schurity} property. Recall that an $S$-ring over the group $G$ is called \emph{schurian} if it is the transitivity module of a point stabilizer in a subgroup of the symmetric group $\sym(G)$ that contains all right translations of $G$. The schurity property for $S$-rings is closely related to the Cayley graph isomorphism problem (see~\cite[Section~2.2.3]{CP} and~\cite{KP}). Schur had conjectured, in our terms, that every $S$-ring is schurian. However, this conjecture was disproved by Wielandt~\cite{Wi}. In honor of this Schur's fallacy, P\"{o}schel suggested the following definition~\cite{Po}. The group $G$ is said to be \emph{Schur} if every $S$-ring over $G$ is schurian. The problem of determining of all Schur groups suggested by P\"{o}schel was studied during more than fourty years. The most of the results on this problem  are concerned with abelian groups (see~\cite{EKP1,EKP2,MP2,PR,Ry1,Ry2}).

Necessary conditions of schurity for nonabelian groups were obtained in~\cite{PV}. Computer calculations~\cite{Ziv} imply that there are nonabelian Schur groups. On the other hand, it is not known up to present moment whether there exists an infinite family of nonabelian Schur groups. One of the reasons for this is that the attempts to find such family meet some hard unsolved problems, e.g. the problem of existence of a difference set in a cyclic group (see~\cite{MP2}). By the above reasons, the problem of determining of all nonabelian Schur groups seems to be hopeless. 

The main goal of the present paper is to study the schurity property of central $S$-rings over nonabelian groups. Following the strategy from~\cite{CMP}, we introduce the next definition. The group~$G$ is defined to be \emph{generalized Schur} if every central $S$-ring over~$G$ is schurian. Observe that every Schur group is generalized Schur and in abelian case schurity and generalized schurity are equivalent. As it will be shown further, there are infinitely many generalized Schur groups which are not Schur. We suggest the following problem which is a modification of the P\"{o}schel problem. 

\begin{prob}
 \emph{Determine all generalized Schur groups.} 
\end{prob}

In this paper, we establish basic properties of generalized Schur groups and found infinite families of such groups. The first result on Schur groups is the P\"{o}schel theorem which states that cyclic $p$-groups are Schur and if $p\geq 5$ is a prime, then a Schur $p$-group is cyclic. Our first result is concerned with generalized Schur $p$-groups. 

\begin{theo}\label{main1}
Let $p$ be a prime.
\\
\\
\noindent $(1)$ If a noncyclic $p$-group is generalized Schur, then $p\in\{2,3\}$.
\\
\\
\noindent $(2)$ If $p\in \{2,3\}$, then a $p$-group with a maximal cyclic subgroup is generalized Schur.

\end{theo}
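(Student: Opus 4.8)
The plan is to handle the two parts by entirely different routes: part~$(1)$ will follow from a general closure property of the class of generalized Schur groups together with the Pöschel theorem, while part~$(2)$ will be proved by invoking the classification of $p$-groups with a cyclic subgroup of index~$p$ and checking, case by case, that every central $S$-ring over such a group decomposes as a (generalized) wreath product of $S$-rings over cyclic or small abelian $p$-groups.

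For part~$(1)$ the crucial step is to show that \emph{a quotient of a generalized Schur group is again generalized Schur}. Let $G$ be generalized Schur, $N\trianglelefteq G$, $\pi\colon G\to G/N$ the canonical epimorphism, and let $\cB$ be a central $S$-ring over $G/N$. Set $\cC=Z(\mZ G)\cap\mZ N$, the $S$-ring over $N$ spanned by the conjugacy classes of $G$ contained in $N$; since $\cC$ is invariant under conjugation by $G$, the wreath product $\cA=\cC\wr\cB$ is a well-defined $S$-ring over $G$ in which $N$ is an $\cA$-subgroup and $\cA_{G/N}=\cB$. Moreover $\cA$ is central: its basic sets inside $N$ are conjugacy classes of $G$, and each basic set outside $N$ has the form $\pi^{-1}(Y)$ for a basic set $Y$ of $\cB$, which is a union of conjugacy classes of $G$ because $x^G\subseteq\pi^{-1}(Y)$ is equivalent to $x\in\pi^{-1}(Y)$. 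As $G$ is generalized Schur, $\cA$ is schurian; since $N$ is an $\cA$-subgroup and quotients of schurian $S$-rings are schurian, $\cB=\cA_{G/N}$ is schurian, so $G/N$ is generalized Schur. Now if a noncyclic $p$-group $G$ is generalized Schur, then $G/\Phi(G)\cong C_p^d$ with $d\geq 2$, so $G$ has a quotient isomorphic to $C_p\times C_p$; by the closure property this quotient is generalized Schur, hence Schur (it is abelian), hence cyclic by the Pöschel theorem unless $p<5$. Thus $p\in\{2,3\}$.

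For part~$(2)$ I would start from the fact that a $p$-group with a cyclic subgroup of index~$p$ is, for $p=2$, one of $C_{2^n}$, $C_{2^{n-1}}\times C_2$, $D_{2^n}$, $Q_{2^n}$, $SD_{2^n}$, $M_{2^n}$, and for $p=3$ one of $C_{3^n}$, $C_{3^{n-1}}\times C_3$, $M_{3^n}$. The groups $C_{p^n}$ (Pöschel) and $C_{p^{n-1}}\times C_p$ with $p\in\{2,3\}$ (by the cited results on abelian Schur $p$-groups) are Schur, hence generalized Schur. So it remains to prove that every central $S$-ring $\cA$ over $G\in\{D_{2^n},Q_{2^n},SD_{2^n},M_{2^n},M_{3^n}\}$ is schurian, and here I would exploit the rigidity of $Z(\mZ G)$. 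In every such $G$ there is a cyclic normal subgroup $C$ of index~$p$ containing $G'$, every nontrivial conjugacy class has size $1$ or $p$, and: for $D_{2^n},Q_{2^n},SD_{2^n}$ the conjugacy classes not contained in $C$ are precisely the two cosets of $G'$ not contained in $C$, with $G/G'\cong C_p\times C_p$; for $M_{p^n}$ every non-central class is a nontrivial coset of $G'$ (the Camina-like property) and $G/G'\cong C_{p^{n-2}}\times C_p$. Using this, I would argue that $\cA$ is either trivial (hence schurian) or is a (generalized) wreath product with respect to $\cA$-subgroups among $\{1,G',C,G\}$: when $C$ and $G'$ are both $\cA$-subgroups the decomposition is a $C/G'$-wreath product, and in the remaining subcases $\cA$ turns out to be an ordinary wreath product over $C$ or over $G'$. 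In all cases the resulting factors are $\cA_C$, an $S$-ring over a cyclic $p$-group, hence schurian, and $\cA_{G/G'}$ (or $\cA_{G/C}$), an $S$-ring over an abelian $p$-group which is one of $C_{p^2}$, $C_p\times C_p$, $C_{p^{n-2}}\times C_p$, $C_p$, all Schur for $p\in\{2,3\}$, hence schurian; therefore $\cA$ is schurian because a $U/L$-wreath product with schurian sections $\cA_U$ and $\cA_{G/L}$ is schurian.

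The hard part is the case analysis in part~$(2)$: deciding which of $1,G',C,G$ are $\cA$-subgroups and establishing the wreath decomposition. This is where the structure-constant identities of central $S$-rings have to be used to rule out ``mixed'' basic sets — ones straddling $C$, or straddling $G'$ — outside the configurations that do yield a wreath product; the Camina-like description of the classes of $M_{p^n}$ is what forces the decomposition over $C/G'$ there, and an induction on $|G|$ inside each family mops up any residual cases. By contrast, everything in part~$(1)$, and the schurity of the cyclic and rank-$\le 2$ abelian factors used in part~$(2)$, is routine or available in the literature.
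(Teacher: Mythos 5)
Your part~$(1)$ is correct and is essentially the paper's argument: generalized schurity passes to quotients, and then $G/\Phi(G)$ forces $p\in\{2,3\}$ via P\"oschel. Your way of getting the quotient-closure differs only in mechanism: the paper takes the nonschurian central $\cB$ over $G/N$, forms the central $S$-ring $\cT_N\wr\cB$ and applies the wreath criterion (Lemma~\ref{schurwreath}) in contrapositive, whereas you form $(Z(\mathbb{Z}G)\cap\mathbb{Z}N)\wr\cB$ and invoke the fact that the quotient of a schurian $S$-ring modulo a normal $\cA$-subgroup is schurian. That fact is true (the $N$-cosets form a block system, the setwise stabilizer of the block $N$ in a witnessing group $K$ is $K_eN_r$, and its orbits are the sets $XN$), but you should either prove it in this one-line form or cite it; as written it is asserted bare.

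Part~$(2)$ is where there is a genuine gap. First, your closing principle --- ``a $U/L$-wreath product with schurian sections $\cA_U$ and $\cA_{G/L}$ is schurian'' --- is false in general: nonschurian $S$-rings over cyclic groups arise precisely as nontrivial generalized wreath products whose factors are schurian, which is the whole point of the classification of cyclic Schur groups. Even with $|U/L|\le 2$ the conclusion is a nontrivial theorem ([MP2, Corollary~10.3]), which this paper invokes only over \emph{abelian} groups (Lemma~\ref{schurgenwreath}); you would be applying it directly over $D_{2^n}$, $SD_{2^n}$, $Q_{2^n}$, $M_{p^n}$ with no justification. Second, the structural core of your plan --- that the relevant $\cA$-subgroups lie among $\{1,G',C,G\}$ and that $\cA$ decomposes as a (generalized) wreath product over them --- is only announced (``structure-constant identities \dots rule out mixed basic sets'', ``induction \dots mops up''), not proved; note that in an ordinary wreath decomposition the bottom subgroup can be an arbitrary $\cA$-subgroup of the cyclic maximal subgroup, so the top factor lives over a dihedral or quaternion quotient rather than over one of your listed abelian groups, and the analysis does not close up as sketched. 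The paper's proof sidesteps all of this with Lemma~\ref{semidirect}: each nonabelian group in the classification is a (partial) semidirect product $G=AB$ of two cyclic subgroups, and for a central $\cA$ the abelian group $A_lB_r\le\aut(\cA)$ is regular and isomorphic to the central product of $A$ and $B$, which is $C_{p^{k-1}}\times C_p$; hence every central $S$-ring over $G$ is isomorphic to an $S$-ring over $C_{p^{k-1}}\times C_p$, a Schur group for $p\in\{2,3\}$, and schurity is invariant under isomorphism. To repair your write-up you would either have to carry out the full case analysis with a correct schurity criterion for the generalized wreath products that actually occur, or adopt the regular-abelian-subgroup argument.
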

\noindent Statement~$2$ implies, in particular, that there are infinitely many nonabelian generalized Schur groups. Observe that nonabelian $2$- and $3$-groups containing a maximal cyclic subgroup except the quaternion group and, possibly, dihedral $2$-groups are not Schur by~\cite[Theorem~11.1]{MP2},~\cite[Theorem~4.2]{PV}, and~\cite[Theorem~1.1]{Ry1}. 

The second result of the present paper is concerned with Camina groups. Recall that $G$ is a \emph{Camina} group if $G$ has a proper nontrivial normal subgroup $H$ such that each $H$-coset distinct from~$H$ is contained in a conjugacy class of~$G$~\cite{Camina}. In this case, the pair $(G,H)$ is called a \emph{Camina pair}. The class of the Camina groups includes, in particular, all Frobenius and extraspecial groups. In~\cite{CMP}, it was proved that every Camina group is a generalized B-group. We prove a sufficient condition of generalized schurity for Camina groups.

\begin{theo}\label{main2}
Let $(G,H)$ be a Camina pair. If $H$ and $G/H$ are generalized Schur groups, then so is $G$. In particular, a Frobenius group with generalized Schur kernel and complement is generalized Schur. 
\end{theo}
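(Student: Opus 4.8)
The plan is to take an arbitrary central $S$-ring $\cA$ over $G$ and show it is schurian, using the Camina structure to relate $\cA$ to central $S$-rings over the quotient $G/H$ and over the subgroup $H$. The first step is to exploit the defining property of the Camina pair: every element $g \in G \setminus H$ has its entire coset $gH$ inside the conjugacy class $g^G$, so in the center $Z(\mathbb ZG)$ the class sums outside $H$ are ``large'' and are unions of cosets of $H$. Since $\cA$ is central, every basic set of $\cA$ is a union of conjugacy classes of $G$; hence every basic set of $\cA$ lying outside $H$ is a union of cosets of $H$. This should show that the set $H$ is an $\cA$-subgroup (it is a basic set or a union of basic sets, being normal and $\cA$ being central so that $H$, as a union of classes, is $\cA$-closed), and moreover that the partition of $G \setminus H$ into basic sets of $\cA$ is a refinement-compatible lift of a partition of $(G/H) \setminus \{1\}$. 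In $S$-ring language: $H$ is an $\cA$-subgroup and $\cA$ induces a quotient $S$-ring $\cA_{G/H}$ on $G/H$, which is again central (quotients of central $S$-rings by normal $\cA$-subgroups are central), and the restriction $\cA_H$ to $H$ is a central $S$-ring over $H$.

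Next I would invoke the hypotheses: $\cA_{G/H}$ is schurian because $G/H$ is generalized Schur, and $\cA_H$ is schurian because $H$ is generalized Schur. The task is then to ``glue'' these two schurity certificates into one for $\cA$ over $G$. The natural tool is a generalized wreath product / tensor-type decomposition of $\cA$ along the $\cA$-section $G/H \succeq H$, together with a theorem (analogous to the classical result of Evdokimov–Ponomarenko for the cyclic case, and its extensions) asserting that a generalized wreath product of schurian $S$-rings is schurian provided the constituents and the ``gluing data'' are schurian. Concretely, because every basic set outside $H$ is a full union of $H$-cosets, $\cA$ is the $S$-ring-theoretic ``$H \wr G/H$-type'' extension: the coset structure means $\cA$ is determined by $\cA_H$, by $\cA_{G/H}$, and there is essentially no nontrivial ``mixing'' cocycle to worry about since the cosets outside $H$ are not split at all. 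So $\cA$ is a generalized wreath product of $\cA_H$ and $\cA_{G/H}$ with respect to the section $(G/H, H)$, and the standard schurity-of-wreath-products argument applies: one builds the permutation group on $G$ from the permutation groups witnessing schurity of $\cA_H$ on $H$ and of $\cA_{G/H}$ on $G/H$, taking an imprimitive action with blocks the cosets of $H$, acting on each block via (a translate of) the $H$-group and permuting blocks via the $G/H$-group, and checks that the $2$-orbits of the point stabilizer are exactly the basic sets of $\cA$.

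The main obstacle, and where the real work lies, is the gluing step: one must be careful that $G$ is genuinely the internal object corresponding to this wreath decomposition, i.e. that the extension $1 \to H \to G \to G/H \to 1$ does not obstruct assembling the two permutation groups into a subgroup of $\sym(G)$ containing $G_{\r}$. For the wreath product to be ``realizable'' inside $\sym(G)$ one typically needs the section to be a \emph{split} situation or needs an argument that the relevant $S$-ring is a \emph{$U/L$-wreath product} with $L = H$, $U = G$ in the sense that every basic set is either inside $L$ or a union of $U$-... cosets — which is exactly what the Camina property gives us for $U = G$, $L = H$. I expect that once $H$ is shown to be an $\cA$-subgroup with all outside basic sets unions of its cosets, $\cA$ is literally a wreath product $\cA_H \wr \cA_{G/H}$ (not merely a generalized one), and then the classical fact that a wreath product of schurian $S$-rings over $H$ and over $G/H$ is a schurian $S$-ring over any group $G$ that is a corresponding extension finishes the proof. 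The ``in particular'' clause is then immediate: a Frobenius group with kernel $H$ and complement $K$ satisfies that $(G,H)$ is a Camina pair with $G/H \cong K$, so if both $H$ and $K$ are generalized Schur, so is $G$.
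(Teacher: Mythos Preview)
Your argument has a genuine gap at the step where you assert that $H$ is an $\mathcal{A}$-subgroup. You reason that $H$, being normal, is a union of conjugacy classes and hence ``$\mathcal{A}$-closed''. But the implication runs the wrong way: centrality of $\mathcal{A}$ says that every basic set of $\mathcal{A}$ is a union of conjugacy classes, not that every union of conjugacy classes is an $\mathcal{A}$-set. A basic set $X$ of $\mathcal{A}$ may perfectly well meet both $H$ and $G\setminus H$; the Camina hypothesis only tells you that the part $X\cap(G\setminus H)$ is a union of $H$-cosets, it says nothing about $X\cap H$. The simplest witness is the trivial $S$-ring $\mathcal{T}_G$: it is central, its only nontrivial basic set is $G^{\#}$, and $H$ is \emph{not} a $\mathcal{T}_G$-subgroup whenever $\{e\}<H<G$. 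So the decomposition $\mathcal{A}=\mathcal{A}_H\wr\mathcal{A}_{G/H}$ on which the rest of your outline rests is simply unavailable in general.

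The paper handles precisely this obstruction. When some basic set $X$ straddles $H$, the Camina property still gives $H\leq\rad(X\setminus H)$, and then Lemma~\ref{separat} forces $X=\langle X\rangle\setminus\rad(X)$. Writing $U=\langle X\rangle$ and $L=\rad(X)$ produces normal $\mathcal{A}$-subgroups with $L\leq H\leq U$, and one gets the finer decomposition $\mathcal{A}=(\mathcal{A}_L\wr\mathcal{T}_{U/L})\wr\mathcal{A}_{G/U}$ (this is Proposition~\ref{3wreath}). Schurity of $\mathcal{A}$ then reduces, via Lemma~\ref{schurwreath}, to schurity of $\mathcal{A}_L$ and of $\mathcal{A}_{G/U}$; the former is pushed up to a central $S$-ring $\mathcal{A}_L\wr\mathcal{T}_{H/L}$ over $H$, and the latter is a central $S$-ring over a quotient of $G/H$, handled through Lemma~\ref{quotient}. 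Your wreath-product endgame is essentially right once the correct pair of $\mathcal{A}$-subgroups is in hand, but finding that pair when $H$ itself is not one of them is the missing piece.
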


Note that the above condition is not necessary. If $(G,H)$ is a Camina pair and $G$ is generalized Schur, then $H$ can be non-generalized Schur in general (see Example~$3$). Since groups of prime orders are Schur~\cite{Po} and a nonabelian group whose order is a product of two primes is Frobenius, the following statement immediately follows from Theorem~\ref{main1}.

\begin{corl}\label{pq}
Let $p$ and $q$ are primes such that $q \equiv 1 \pmod p$. Then the nonabelian group of order~$pq$ is generalized Schur.
\end{corl}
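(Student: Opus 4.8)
The plan is to realize the nonabelian group of order $pq$ as a Frobenius group and then apply the ``in particular'' part of Theorem~\ref{main2}. Write $G$ for the nonabelian group of order $pq$. The first step is the classical structure theory of such groups: the congruence $q\equiv 1\pmod p$ is exactly the condition under which $\aut(\mZ_q)\cong\mZ_{q-1}$ possesses an element of order $p$, so up to isomorphism there is a unique nonabelian group of order $pq$, namely the semidirect product $G=\mZ_q\rtimes\mZ_p$ in which $\mZ_p$ acts nontrivially, hence faithfully, on $\mZ_q$. In this group the Sylow $q$-subgroup $H\cong\mZ_q$ is normal, and each nonidentity element of a complement $\mZ_p$ acts without fixed points on $H$; therefore $G$ is a Frobenius group with kernel $H$ and complement $\mZ_p$.

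The second step is to verify the hypotheses of Theorem~\ref{main2} for this Frobenius decomposition. Both the kernel $H\cong\mZ_q$ and the complement $\mZ_p$ have prime order, and groups of prime order are Schur by P\"{o}schel's theorem~\cite{Po}; since every Schur group is generalized Schur (and for abelian groups the two notions coincide, as recalled in the introduction), both the kernel and the complement are generalized Schur. Theorem~\ref{main2} then applies directly and yields that $G$ is generalized Schur, which is the assertion of the corollary.

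I do not expect a genuine obstacle here: the corollary is essentially a direct specialization of Theorem~\ref{main2}, the only external inputs being the quoted fact that prime-order groups are Schur and the elementary classification of groups of order $pq$. The single point deserving a sentence of justification is the implication ``Schur $\Rightarrow$ generalized Schur'' for the two prime-order factors, which is immediate from the definitions in the introduction. If one prefers to avoid Frobenius terminology and use instead the first (more general) assertion of Theorem~\ref{main2}, one checks directly that $(G,H)$ is a Camina pair: for $g\notin H$ the fixed-point-freeness gives $C_G(g)\cap H=1$, so $|g^G|=[G:C_G(g)]\geq |H|$, while $g^G\subseteq Hg$ because $G/H\cong\mZ_p$ is abelian, whence $g^G=Hg$; thus each nontrivial $H$-coset is a single conjugacy class.
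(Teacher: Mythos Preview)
Your proof is correct and follows the same approach as the paper: recognize the nonabelian group of order $pq$ as a Frobenius group with prime-order kernel and complement, invoke P\"{o}schel's result that prime-order groups are Schur, and apply Theorem~\ref{main2}. (The paper's text actually cites Theorem~\ref{main1} at this point, but the stated reasoning---Frobenius structure plus schurity of prime-order groups---makes clear that Theorem~\ref{main2} is what is intended, exactly as you use it.)
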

\noindent Recall that schurity of cyclic groups whose order is a product of two primes was verified in~\cite{KP}; this was one of the first results on Schur groups. There are infinitely many nonabelian non-Schur groups of order~$pq$, where $p$ and $q$ are primes, by~\cite[Corollary~5.3]{PV}.

Finally, we obtain the classification of dihedral generalized Schur groups.

\begin{theo}\label{main3}
Let $n\geq 1$ be an integer. The dihedral group of order~$2n$ is generalized Schur if and only if $n$ belongs to one of the following families of integers:
$$p^k, pq^k, 2pq^k, pqr, 2pqr,$$
where $p,q,r$ are primes and $k\geq 0$ is an integer.
\end{theo}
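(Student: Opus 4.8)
The plan is to combine the known classification of Schur dihedral groups with the structural tools provided by Theorems~\ref{main1} and~\ref{main2}, treating the dihedral group $D_{2n}$ according to the factorization type of $n$. First I would dispose of the easy direction. When $n=p^k$, the group $D_{2n}$ is a $2$-group if $p=2$ and otherwise has a maximal cyclic subgroup $\langle x\rangle$ of order $p^k$; when $n=2^a m$ with $m$ odd one should still check that $D_{2n}$ has a maximal cyclic subgroup exactly when $m$ is prime or $1$, so Theorem~\ref{main1}(2) handles $n=p^k$ directly and $n=2p$ as a Frobenius-type case. For $n=pq^k$, $2pq^k$, $pqr$, $2pqr$ I would exhibit $D_{2n}$ as a Camina extension: writing $n=n_1 n_2$ with $\gcd(n_1,n_2)=1$, the dihedral group $D_{2n}$ has a normal subgroup $D_{2n_1}$ (or the cyclic $\langle x^{n_2}\rangle$) with quotient dihedral of smaller order, and one checks that the relevant pair is a Camina pair — for the odd-order cyclic normal subgroup $\langle x^{q}\rangle\trianglelefteq D_{2pq^k}$ the quotient is $D_{2q}$ and the extension is Frobenius, so Theorem~\ref{main2} reduces generalized schurity of $D_{2n}$ to that of the pieces $\mathbb Z_{p}$, $D_{2q}$, $D_{2q^k}$, etc., each of which is generalized Schur by the earlier parts or by induction on the number of prime factors. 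The base cases — cyclic groups of prime power order and $D_{2p}$, $D_{2p^k}$ — are covered by P\"oschel's theorem, Corollary~\ref{pq}, and Theorem~\ref{main1}(2).

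The harder direction is to show that if $n$ is \emph{not} of one of the listed forms then $D_{2n}$ is not generalized Schur, i.e. to construct a non-schurian central $S$-ring over $D_{2n}$. Here the key observation is that a central $S$-ring over $D_{2n}$ is controlled by its behavior on the cyclic subgroup $\langle x\rangle\cong\mathbb Z_n$ together with the (single) noncentral class structure on the reflections, so there should be a tight correspondence between central $S$-rings over $D_{2n}$ and certain (rational, or conjugation-invariant) $S$-rings over $\mathbb Z_n$. I would make this precise: restriction to $\langle x\rangle$ and the natural quotient map should set up a reduction so that a non-schurian $S$-ring over $\mathbb Z_n$ of the appropriate symmetric/rational type lifts to a non-schurian central $S$-ring over $D_{2n}$. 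Then I invoke the classification of Schur (equivalently, since these are abelian, generalized Schur) cyclic groups: $\mathbb Z_n$ is Schur precisely for $n$ in the list $p^k$, $p^kq$, $pqr$, $2^k$, $2^kp$, $2pqr$, ... (the Evdokimov–Kovács–Pyber / Ryabov classification), and non-Schur cyclic groups of all other orders furnish the needed non-schurian $S$-rings. The arithmetic of matching the dihedral list against the cyclic list — explaining why the factor $2$ is allowed only with at most one further odd prime (or with $pqr$), and why $p^kq$ is allowed for the full $q^k$ on the ``dihedral side'' — is a bookkeeping step I would carry out by comparing the two lists factor-type by factor-type.

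I expect the main obstacle to be the precise reduction between central $S$-rings over $D_{2n}$ and $S$-rings over $\mathbb Z_n$, in both directions: on the schurity side one must show that a schurian $S$-ring upstairs comes from a schurian $S$-ring downstairs (and conversely, that the Camina/wreath-type construction of Theorem~\ref{main2} really applies with the dihedral normal subgroups), while on the non-schurity side one must verify that the lifted $S$-ring is genuinely central (lies in $Z(\mathbb Z D_{2n})$) and genuinely non-schurian, which requires tracking how the automorphism group / $2$-closure of the configuration interacts with the reflection part of $D_{2n}$. A secondary delicate point is the treatment of small prime factor $2$: dihedral $2$-groups themselves sit on the borderline (the excerpt notes their Schur status is not fully settled), so the cases $n=2^k$, $n=2p$, $n=2pq^k$, $n=2pqr$ and their complements may need separate, slightly ad hoc arguments rather than a uniform reduction. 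Once these reductions are in place, the theorem follows by matching the dihedral families listed in the statement against the known list of Schur cyclic groups.
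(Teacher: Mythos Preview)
Your proposal has the right overall shape --- reduce the question on $D_{2n}$ to the classification of cyclic Schur groups --- and your treatment of the ``only if'' direction (lifting a non-schurian symmetric $S$-ring over $C_n$ to a central one over $D_{2n}$ via $\mathcal{A}\wr\mathcal{T}_{G/A}$) is essentially what the paper does. But the ``if'' direction contains a genuine gap.

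The inductive Camina reduction you describe does not work. For $D_{2n}$ the only Camina pair is $(D_{2n},\,C_n)$ with $n$ odd: a pair $(D_{2n},H)$ with $H\le A=\langle x\rangle$ requires every nontrivial $H$-coset inside $A$ to lie in a single conjugacy class, but conjugacy classes inside $A$ have at most two elements, so this forces $H=A$ (and then $bA$ must be a single class, which happens only for odd $n$). In particular your claimed pair $(D_{2pq^k},\,\langle x^{q}\rangle)$ is not Camina, and for even $n$ there is no Camina pair at all. Also, Theorem~\ref{main1}(2) concerns $p$-groups, so it applies to $D_{2p^k}$ only when $p=2$; for odd $p$ one must instead use the Frobenius/Camina argument with the full cyclic kernel. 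Thus your iterative scheme cannot reach the even-$n$ families $2pq^k$ and $2pqr$, nor can it peel off prime factors one at a time.

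The paper avoids this by proving directly that $D_{2n}$ is generalized Schur if and only if $C_n$ is Schur (Proposition~\ref{criterion}), and then quoting the cyclic classification. The odd case is a single application of Theorem~\ref{main2} with $H=C_n$. The even case requires a separate structural lemma: any central $S$-ring $\mathcal{A}$ over $D_{2n}$ either decomposes as $\mathcal{A}_L\wr\mathcal{A}_{G/L}$ with $\rk(\mathcal{A}_{G/L})\le 3$ for some $L\le A$, or is an $A/A_1$-generalized wreath product (where $A_1$ is the index-$2$ subgroup of $A$); schurity then follows from the schurity of $\mathcal{A}_L$ (guaranteed by $C_n$ Schur), Lemma~\ref{rank3}, and, in the second case, Lemmas~\ref{semidirect} and~\ref{schurgenwreath} after passing to $C_n\times C_2$. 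The ingredient you are missing is precisely this structural analysis for even $n$, which replaces the unavailable Camina step.
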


In fact, we prove that the dihedral group of order~$2n$ is generalized Schur if and only if the cyclic group of order~$n$ is Schur (Proposition~\ref{criterion}). Modulo the classification of cyclic Schur groups~\cite{EKP1}, this gives the required. Note that there are infinitely many dihedral groups from Theorem~\ref{main3} which are not Schur (see~\cite[Section~5]{PV}).

We finish the introduction with the brief outline of the paper. Section~$2$ contains a necessary background of $S$-rings. In Section~$3$, we establish basic properties of generalized Schur groups. In Section~$4$, $5$, and $6$, we prove Theorems~\ref{main1},~\ref{main2}, and~\ref{main3}, respectively.

\section{$S$-rings}

In this section, we provide a necessary background of $S$-rings. The material of the section can be found, e.g., in~\cite{MP2}. 

\subsection{Definitions and basic facts}
Let $G$ be a finite group and $\mathbb{Z}G$  the integer group ring. The identity element and the set of all nonidentity elements of $G$ are denoted by~$e$ and~$G^\#$, respectively. The symmetric group of the set $G$ is denoted by~$\sym(G)$. If $K\leq \sym(G)$, then the set of all orbits of $K$ on $G$ is denoted by $\orb(K,G)$. The subgroups of $\sym(G)$ induced by the right multiplications and by the left multiplications of $G$ are denoted by $G_{r}$ and $G_l$, respectively. If $X\subseteq G$, then the element $\sum \limits_{x\in X} {x}$ of the group ring $\mathbb{Z}G$ is denoted by~$\underline{X}$. The set $\{x^{-1}:x\in X\}$ is denoted by $X^{-1}$.

A subring  $\mathcal{A}\subseteq \mathbb{Z} G$ is called an \emph{$S$-ring} (a \emph{Schur ring}) over $G$ if there exists a partition $\mathcal{S}=\mathcal{S}(\mathcal{A})$ of~$G$ such that:

$(1)$ $\{e\}\in\mathcal{S}$;

$(2)$  if $X\in\mathcal{S}$, then $X^{-1}\in\mathcal{S}$;

$(3)$ $\mathcal{A}=\Span_{\mathbb{Z}}\{\underline{X}:\ X\in\mathcal{S}\}$.

\noindent The theory of $S$-rings was initiated by Schur within the studying of permutation groups~\cite{Schur} and later it was developed by Wielandt~\cite{Wi}. The elements of $\mathcal{S}$ are called the \emph{basic sets} of  $\mathcal{A}$ and the number $\rk(\mathcal{A})=|\mathcal{S}|$ is called the \emph{rank} of~$\mathcal{A}$. The multiset consisting of sizes of all basic sets of $\mathcal{A}$ in that multiplicity of each element~$k$ is equal to the number of basic sets of size~$k$ is denoted by $\mathcal{N}(\mathcal{A})$. 

Clearly, $\mathbb{Z}G$ is an $S$-ring over $G$. Another obvious example of an $S$-ring is defined by the partition $\{\{e\},G^\#\}$. This $S$-ring is called \emph{trivial} and denoted by $\mathcal{T}_G$. 

Let $\mathcal{A}^\prime$ be an $S$-ring over $G$. One can see that $\mathcal{A}^\prime\leq \mathcal{A}$, i.e. $\mathcal{A}^\prime$ is a subring of $\mathcal{A}$, if and only if every basic set of $\mathcal{A}^\prime$ is a union of some basic sets of $\mathcal{A}$. 

A set $X \subseteq G$ is called an \emph{$\mathcal{A}$-set} if $\underline{X}\in \mathcal{A}$. A subgroup $H \leq G$ is called an \emph{$\mathcal{A}$-subgroup} if $H$ is an $\mathcal{A}$-set. One can check that for every $\mathcal{A}$-set $X$, the groups $\langle X \rangle$ and $\rad(X)=\{g\in G:\ gX=Xg=X\}$ are $\mathcal{A}$-subgroups. The $S$-ring $\mathcal{A}$ is called \emph{primitive} if there are no nontrivial proper $\mathcal{A}$-subgroups of $G$ and \emph{imprimitive} otherwise.

\begin{lemm}\cite[Theorem~2.6]{MP2} \label{separat}
Let $\mathcal{A}$ be an $S$-ring over a group $G$. Suppose that $X\in \mathcal{S}(\mathcal{A})$ and $H\leq G$ are such that
$$X\cap H \neq \varnothing,~X \setminus H \neq \varnothing,~\text{and}~\langle X\cap H\rangle \leq \rad(X\setminus H).$$
Then $X=\langle X \rangle \setminus \rad(X)$ and $\rad(X) \leq H$.
\end{lemm}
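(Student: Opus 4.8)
\emph{Proof plan.}

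The plan is to first reduce to the case $\langle X\rangle=G$. Since $X\subseteq\langle X\rangle$, restricting $\mathcal{A}$ to $\langle X\rangle$ and replacing $H$ by $H\cap\langle X\rangle$ leaves $X\cap H$, $X\setminus H$ and $\rad(X)$ unchanged, so both the hypothesis and the meaning of the conclusion are preserved; thus I may assume $\langle X\rangle=G$. Set $Y=X\cap H$, $Z=X\setminus H$, $L=\langle Y\rangle$, $R=\rad(X)$ and $M=G\setminus X$. Since $Y\neq\varnothing\neq Z$ we have $|X|\geq2$, so $e\notin X$ and hence $e\in M$; also $Y\subseteq L\subseteq H$, $R\subseteq M$ always, and $M$ is an $\mathcal{A}$-set (the complement of the $\mathcal{A}$-set $X$ in $G$). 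From $L\leq\rad(Z)$ one gets $\ell Z=Z\ell=Z$ for all $\ell\in L$; since in particular $yX=(yY)\cup Z\subseteq L\cup Z$ and $y^{-1}X=(y^{-1}Y)\cup Z\subseteq L\cup Z$ for $y\in Y$, and $L\cup Z$ is stable under multiplication by elements of $L$ on either side, induction on word length in $Y^{\pm1}$ gives $LX\subseteq L\cup Z$ and likewise $XL\subseteq L\cup Z$, with $LXL=LYL\sqcup Z$ and $LYL\subseteq L$.

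Next I would reformulate the goal: it suffices to show that $M$ is a subgroup of $G$ contained in $H$. Indeed, if $M\leq G$ then for $m\in M$ we have $mX=m(G\setminus M)=G\setminus M=X$, so $M\subseteq R$; together with $R\subseteq M$ this gives $R=M$, i.e. $X=G\setminus R=\langle X\rangle\setminus\rad(X)$, and $M\subseteq H$ then reads $\rad(X)\leq H$. Moreover, as $e\in M$ and $M$ is finite, "$M$ is a subgroup" is equivalent to "$M$ is closed under multiplication". So everything reduces to: (i) $G\setminus H\subseteq X$ (equivalently $M\subseteq H$), and (ii) $M\cdot M\subseteq M$. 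The engine for both is the structure-constant formalism: writing $\underline{X}=\underline{Y}+\underline{Z}$ and expanding $\underline{X}\,\underline{X^{-1}}=\sum_{W}c_W\underline{W}$, and using $H\cdot(G\setminus H)\subseteq G\setminus H$ to kill the cross terms on $H$, one obtains, for $g\in H$,
\[
|X\cap gX|=|Y\cap gY|+|Z\cap gZ|,
\]
with the second summand equal to $|Z|$ when $g\in L$. Since $|X\cap gX|$ is constant on the basic set containing $g$ and $Y\subseteq X$, this already forces $L\setminus Y\subseteq R$ (and $L\cap M=L\setminus Y\subseteq R\cap H$).

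The hard part, and the step I expect to be the main obstacle, is (i): showing that no basic set meets $G\setminus H$ without being contained in $X$. This is exactly where one must use that $X$ is a genuine basic set and not merely a set with the radical property of the hypothesis — for instance, in $\mathbb{Z}_{12}$ with $H=\langle4\rangle$ the set $\{\text{odd elements}\}\cup\{4,8\}$ satisfies $\langle X\cap H\rangle\leq\rad(X\setminus H)$ yet is not a basic set of any $S$-ring (its square has a non-constant coefficient on it). So I would argue by contradiction: a hypothetical "extra" basic set $V$ with $V\cap(G\setminus H)\neq\varnothing$ and $V\cap X=\varnothing$ would, via the constancy of the relevant structure constant forced on all of $V$ together with the $L$-invariance of $Z$, lead to an impossible count — incompatible with $\sum_W c_W|W|=|X|^2$ for the expansion of $\underline{X}\,\underline{X^{-1}}$ (or directly with the value of $|X\cap gX|$ that is pinned down on $V$). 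Note this step also forces the symmetry $X=X^{-1}$ that is implicit in the conclusion, since $\langle X\rangle\setminus\rad(X)$ is a symmetric set. Once (i) is in hand, $Z=G\setminus H$, so every $g\in M\subseteq H$ automatically satisfies $gZ=g(G\setminus H)=G\setminus H=Z$; feeding this back into the displayed identity and again invoking constancy on basic sets forces $gX=X$ for all $g\in M$, which yields (ii), hence $M=R\leq H$, completing the proof.
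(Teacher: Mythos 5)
The paper itself gives no argument for this lemma---it is quoted verbatim from \cite[Theorem~2.6]{MP2}---so your plan has to stand on its own, and as written it does not. The preparatory reductions are sound: passing to $\mathcal{A}_{\langle X\rangle}$ so that $\langle X\rangle=G$, the observation $\rad(X)\cap X=\varnothing$ (otherwise $e\in X$), and the reformulation that it suffices to show $M=G\setminus X$ is a subgroup contained in $H$, i.e. your (i) and (ii); one could even add the further harmless reduction of replacing $H$ by $L=\langle X\cap H\rangle$, since $X\cap L=X\cap H$ and the hypotheses persist. But step (i), which you yourself identify as the main obstacle, is the entire content of the lemma, and for it you only announce that a basic set $V\neq X$ meeting $G\setminus H$ would ``lead to an impossible count'' against $\sum_W c_W|W|=|X|^2$, without saying what is counted or why it fails. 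The obvious estimates coming from your displayed identity do not suffice: writing $Y=X\cap H$, $Z=X\setminus H$, $c(g)=|X\cap gX|$, one has $c(e)=|X|$, $c\ge|Z|$ on $L$, and $c\equiv c_X\ge|Z|$ on $Z\cup Z^{-1}$, and summing these lower bounds over the disjoint sets $\{e\}$, $L^{\#}$, $Z\cup Z^{-1}$ gives an inequality whose slack is $(|Y|-1)(|Y|+|Z|)\ge 0$ --- it is always satisfied, so no contradiction appears and a genuinely new idea, not bookkeeping, is needed exactly here.

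Two subsidiary deductions are also asserted rather than derived. First, constancy of $c$ on the (unknown) basic set through $g\in L\setminus Y$ gives no control on $|Y\cap gY|$, so the claim that it ``already forces'' $L\setminus Y\subseteq\rad(X)$ is unjustified as stated. Second, the final claim that, once (i) holds, constancy ``forces $gX=X$ for all $g\in M$'' is again not an argument; this gap, however, is repairable, because (ii) follows from (i) by a cleaner route: once $G\setminus H\subseteq X$, the set $M=G\setminus X$ is an $\mathcal{A}$-set contained in $H$, so the support of $\underline{M}\,\underline{M}$ is an $\mathcal{A}$-set contained in $H$; if it met $X$ it would contain the basic set $X\not\subseteq H$, a contradiction, hence $MM\subseteq M$ and $M$ is a subgroup of $H$, which as you correctly note yields $M=\rad(X)$ and the conclusion. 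So the frame of your reduction is fine, but with (i) unproved the proposal does not prove the lemma; that missing step is precisely what \cite[Theorem~2.6]{MP2} supplies.
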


Let $\{e\}\leq L \unlhd U\leq G$. A section $U/L$ is called an \emph{$\mathcal{A}$-section} if $U$ and $L$ are $\mathcal{A}$-subgroups. If $S=U/L$ is an $\mathcal{A}$-section, then the module
$$\mathcal{A}_S=Span_{\mathbb{Z}}\left\{\underline{X}^{\pi}:~X\in\mathcal{S}(\mathcal{A}),~X\subseteq U\right\},$$
where $\pi:U\rightarrow U/L$ is the canonical epimorphism, is an $S$-ring over $S$.

\subsection{Isomorphisms and schurity}

Let $X\subseteq G$. By the \emph{Cayley digraph} $\cay(G,X)$ over $G$ with connection set $X$, we mean the digraph with vertex set $G$ and arc set $\{(g,xg):~x\in X,~g\in G\}$. Let $\mathcal{A}^\prime$ be an $S$-ring over a group $G^\prime$. A bijection $f$ from $G$ to $G^\prime$ is called an \emph{isomorphism} from $\mathcal{A}$ to $\mathcal{A}^\prime$ if for every $X\in \mathcal{S}(\mathcal{A})$ there exists $X^\prime\in \mathcal{S}(\mathcal{A}^\prime)$ such that $f$ is an isomorphism from $\cay(G,X)$ to $\cay(G^\prime,X^\prime)$. Two $S$-rings are called \emph{isomorphic} if there exists an isomorphism between them.

The \emph{automorphism group} $\aut(\mathcal{A})$ of the $S$-ring $\mathcal{A}$ is defined to be the group 
$$\bigcap \limits_{X\in \mathcal{S}(\mathcal{A})} \aut(\cay(G,X)).$$
Since $\aut(\cay(G,X))\geq G_{r}$ for every $X\in \mathcal{S}(\mathcal{A})$, we conclude that $\aut(\mathcal{A})\geq G_{r}$. One can check that $\mathcal{A}$ is isomorphic to an $S$-ring over a group $H$ if and only if $\aut(\mathcal{A})$ has a regular subgroup isomorphic to~$H$.

Let $K$ be a subgroup of $\sym(G)$ containing $G_{r}$. Schur proved in~\cite{Schur} that the $\mathbb{Z}$-submodule
$$V(K,G)=\Span_{\mathbb{Z}}\{\underline{X}:~X\in \orb(K_e,~G)\},$$
where $K_e$ is a stabilizer of $e$ in $K$, is an $S$-ring over $G$. An $S$-ring $\mathcal{A}$ over  $G$ is called \emph{schurian} if $\mathcal{A}=V(K,G)$ for some $K\leq \sym(G)$ with $K\geq G_{r}$. Clearly, $\mathcal{T}_G=V(\sym(G),G)$ and hence $\mathcal{T}_G$ is schurian.

Let $K \leq \aut(G)$. Then $\orb(K,G)$ forms a partition of $G$ that defines the $S$-ring $\mathcal{A}$ over~$G$. In this case, $\mathcal{A}$ is called \emph{cyclotomic} and denoted by $\cyc(K,G)$. If $\mathcal{A}=\cyc(K,G)$ for some $K\leq \aut(G)$, then $\mathcal{A}=V(G_rK,G)$. So every cyclotomic $S$-ring is schurian.

The group $G$ is called a \emph{Schur} group if every $S$-ring over $G$ is schurian. It can be checked easily that a section of Schur group is Schur. Every group of order at most~$15$ is Schur (see~\cite{Ziv}).

\subsection{Wreath products}

Let $U/L$ be an $\mathcal{A}$-section of $G$. The $S$-ring~$\mathcal{A}$ is called the \emph{$U/L$-wreath product} or \emph{generalized wreath product} of $\mathcal{A}_U$ and $\mathcal{A}_{G/L}$ if $L$ is normal in $G$ and $L\leq\rad(X)$ for each basic set $X$ outside~$U$. If $L>\{e\}$ and $U<G$, then the $U/L$-wreath product is called \emph{nontrivial}. The notion of the generalized wreath product of $S$-rings was introduced in~\cite{EP}. If $U=L$, then the $U/L$-wreath product coincides with the usual \emph{wreath product} $\mathcal{A}_L\wr \mathcal{A}_{G/L}$ of $\mathcal{A}_L$ and $\mathcal{A}_{G/L}$. The following criterion of schurity for the wreath product can be found, e.g., in~\cite[Theorem~3.2]{MP2}.

\begin{lemm}\label{schurwreath}
Let $\mathcal{A}$ be an $S$-ring over a group~$G$ such that $\mathcal{A}=\mathcal{A}_H\wr \mathcal{A}_{G/H}$ for some normal $\mathcal{A}$-subgroup~$H$. Then $\mathcal{A}$ is schurian if and only if $\mathcal{A}_H$ and $\mathcal{A}_{G/H}$ so are. 
\end{lemm}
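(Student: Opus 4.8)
The plan is to prove the two implications separately, treating the ``only if'' part as an instance of a general principle and reserving the real work for the ``if'' part, where the wreath structure is essential. Throughout I write $K_1=\aut(\mathcal{A}_H)$ and $K_2=\aut(\mathcal{A}_{G/H})$, so that $K_1\geq H_r$, $K_2\geq (G/H)_r$, and $\mathcal{A}_H$ (resp.\ $\mathcal{A}_{G/H}$) is schurian exactly when $V(K_1,H)=\mathcal{A}_H$ (resp.\ $V(K_2,G/H)=\mathcal{A}_{G/H}$), i.e.\ when the orbits of $(K_1)_e$ (resp.\ of the stabilizer of the identity coset in $K_2$) are precisely the basic sets.

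For necessity, suppose $\mathcal{A}$ is schurian and put $K=\aut(\mathcal{A})$, so $V(K,G)=\mathcal{A}$. I would first record that $K_e$ preserves every $\mathcal{A}$-set, hence stabilizes $H$ setwise; since $H_r\leq K$ also stabilizes $H$, the restriction $M$ to $H$ of the setwise stabilizer $K_{\{H\}}$ contains $H_r$, and its point stabilizer $M_e$ has orbits equal to the $K_e$-orbits lying in $H$, which are exactly the basic sets of $\mathcal{A}_H$; thus $V(M,H)=\mathcal{A}_H$ and $\mathcal{A}_H$ is schurian. For the quotient I would use that the connected components of $\cay(G,H)$ are the right cosets of $H$, so $K\leq\aut(\cay(G,H))$ preserves the partition $\{Hg\}$, which therefore is a block system; the induced group $\bar K\leq\sym(G/H)$ contains $(G/H)_r$, and a standard argument identifies the orbits of $\bar K_{\bar e}$ with the basic sets of $\mathcal{A}_{G/H}$, giving $V(\bar K,G/H)=\mathcal{A}_{G/H}$. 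This yields schurity of both sections.

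For sufficiency I would build an explicit overgroup of $G_r$ as a permutational wreath product. Fixing a transversal $\{t_c\}_{c\in G/H}$ with $t_{H}=e$ identifies $G$ with $H\times (G/H)$ via $g\leftrightarrow(gt_{c}^{-1},c)$, where $c=Hg$, and I would let $\widehat{K}=K_1\wr K_2$ act imprimitively on this set: the base group $\prod_{c}K_1$ acts on the fibers $H\times\{c\}$ and the top group $K_2$ permutes the fibers according to its action on $G/H$. The two facts to check are $G_r\leq\widehat{K}$ and $V(\widehat{K},G)=\mathcal{A}$. For the first, a direct computation shows that right translation by $g$ permutes the fibers as the right translation $R_{\bar g}\in(G/H)_r\leq K_2$ and acts on the fiber over $c$ as right translation by the element $t_{c}\,g\,t_{c\bar g}^{-1}\in H$; since these translations lie in $H_r\leq K_1$, the map $R_g$ is realized inside $\widehat{K}$. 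For the second, I would compute the orbits of $\widehat{K}_e$: elements of $H$ are moved within the identity fiber by $(K_1)_e$, producing the basic sets of $\mathcal{A}_H$; an element outside $H$, lying in a coset $c$, is sent by the top part (the stabilizer of the identity coset in $K_2$) to every coset in the basic set $\bar X$ of $\mathcal{A}_{G/H}$ containing $c$, and by the free base factors to every point of each such fiber, so its orbit is $\pi^{-1}(\bar X)$, which is exactly the basic set $X$ of $\mathcal{A}$ through it because $H\leq\rad(X)$ forces $X=\pi^{-1}(\bar X)$. Hence $V(\widehat{K},G)=\mathcal{A}$ and $\mathcal{A}$ is schurian.

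The main obstacle is the sufficiency direction, and within it the matching of $\widehat{K}_e$-orbits with basic sets together with the containment $G_r\leq\widehat{K}$. The containment hinges on the cocycle $t_{c}gt_{c\bar g}^{-1}$ landing in $H$ and being absorbed by right translations, which is precisely where the hypothesis $H_r\leq K_1=\aut(\mathcal{A}_H)$ is used; the orbit computation hinges on the defining property of the wreath product that every basic set outside $H$ is a full $\pi$-preimage. I would make sure that both the well-definedness of the fiberwise action (independence of the transversal, again guaranteed by $H_r\leq K_1$) and the identity $X=\pi^{-1}(\pi(X))$ for basic sets outside $H$ are stated cleanly before carrying out the verification.
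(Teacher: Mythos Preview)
Your proposal is correct, but note that the paper does not actually supply a proof of this lemma: it is quoted verbatim as \cite[Theorem~3.2]{MP2} and used as a black box throughout. So there is no ``paper's own proof'' to compare against; what you have written is a self-contained argument that would replace the citation.

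That said, your argument is the standard one and matches what one finds in the source being cited. The necessity direction is the general fact that restrictions and quotients of a schurian $S$-ring by $\mathcal{A}$-sections are schurian; your verification via $K_{\{H\}}|_H$ and the induced action $\bar K$ on $G/H$ is exactly how this is usually done. The one point worth tightening is the quotient computation: you should note that $\bar K\leq\aut(\mathcal{A}_{G/H})$ because $K$ preserves each $\cay(G,X)$ and every basic set outside $H$ is $\pi$-saturated, so $\bar K_{\bar e}$-orbits refine the basic sets of $\mathcal{A}_{G/H}$; together with the surjection $K_e\twoheadrightarrow\bar K_{\bar e}$ this forces equality. For sufficiency, your permutational wreath product $\widehat K=K_1\wr K_2$ and the cocycle computation $t_c\,g\,t_{c\bar g}^{-1}\in H$ giving $G_r\leq\widehat K$ are exactly right, and the orbit analysis (the base factor over the identity coset constrained to $(K_1)_e$, the others free and hence transitive on fibers) correctly recovers the basic sets because the wreath decomposition guarantees $X=\pi^{-1}(\pi(X))$ for $X\not\subseteq H$.
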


\begin{lemm}\label{schurgenwreath}
Let $\mathcal{A}$ be an $S$-ring over an abelian group~$G$. Suppose that $\mathcal{A}$ is the nontrivial $S$-wreath product for some $\mathcal{A}$-section $S=U/L$, the $S$-rings $\mathcal{A}_U$ and $\mathcal{A}_{G/L}$ are schurian, and $|S|\leq 2$. Then $\mathcal{A}$ is schurian. 
\end{lemm}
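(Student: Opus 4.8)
The plan is to reduce the generalized wreath product to an ordinary wreath product by refining the $S$-ring, using the hypothesis $|S| \le 2$ to control what gets lost in the refinement. Concretely, since $S = U/L$ is an $\mathcal A$-section with $|S| \le 2$, either $U = L$ (in which case $\mathcal A = \mathcal A_L \wr \mathcal A_{G/L}$ is already an ordinary wreath product and Lemma~\ref{schurwreath} together with the hypothesis that $\mathcal A_U = \mathcal A_L$ and $\mathcal A_{G/L}$ are schurian finishes the argument), or $|U:L| = 2$. So the real content is the case $|U:L| = 2$. In that case I would consider the $\mathcal A$-subgroup $L$: every basic set $X$ with $X \not\subseteq U$ satisfies $L \le \rad(X)$ by definition of the $U/L$-wreath product, and I want to show the basic sets lying \emph{inside} $U$ but outside $L$ are either contained in $L$ (impossible, by assumption) or behave well enough that one can pass to the wreath product $\mathcal A_L \wr \mathcal A_{G/L}$ after adding $\underline{L}$ — no wait, that destroys information. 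Let me instead aim to realize $\aut(\mathcal A)$ directly.

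The cleaner route, and the one I would actually pursue, is the following. Let $X_0$ be the unique basic set of $\mathcal A$ contained in $U \setminus L$ (uniqueness because $|U \setminus L| = |L|$ equals the coset, and basic sets inside $U$ project onto basic sets of $\mathcal A_S$, which over a group of order $2$ forces $X_0 = U \setminus L$ to be a single basic set; here $G$ abelian is used so that $U/L$ is a group and $\mathcal A_S = \mathbb Z S$). Thus $X_0 = U \setminus L$ and $\rad(X_0) \ge L$, so in fact $\rad(X_0) = L$ or $U$. Now I would build a permutation group on $G$ witnessing schurity by gluing together automorphism groups coming from $\mathcal A_U$ and $\mathcal A_{G/L}$. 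Write $K_1 = \aut(\mathcal A_U)$ and $K_2 = \aut(\mathcal A_{G/L})$, both containing the respective regular subgroups since $\mathcal A_U$ and $\mathcal A_{G/L}$ are schurian (and actually $\mathcal A_U = V(K_1, U)$, $\mathcal A_{G/L} = V(K_2, G/L)$ for suitable $K_1, K_2$). The standard construction of the automorphism group of a generalized wreath product over $S$ (the ``generalized wreath product of permutation groups'') yields a group $K \le \sym(G)$ with $G_r \le K$; the content of Lemma~\ref{schurgenwreath} is that for $|S| \le 2$ this $K$ actually has $V(K, G) = \mathcal A$ rather than something strictly smaller.

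The main obstacle is exactly that last point: in general $V(K, G)$ for the wreath product of permutation groups can be a proper sub-$S$-ring of the generalized wreath product $\mathcal A$, which is precisely why generalized wreath products need not be schurian even when their constituents are. I would isolate where the discrepancy can occur — it is in the basic sets meeting both $U \setminus L$ nontrivially and the ``outside'' part, i.e. sets of the form $Y$ with $Y \cap U \ne \varnothing$ and $Y \not\subseteq U$ — and argue that when $|U:L| = 2$ these are already forced to be unions of $L$-cosets and hence split correctly. More precisely, if $Y$ is such a basic set then $Y \setminus U \ne \varnothing$ and by the wreath condition $L \le \rad(Y \setminus U)$, while $Y \cap U$, being an $\mathcal A_U$-set contained in a single $\mathcal A$-basic set, is $\varnothing$, $L$, $U\setminus L$, or $U$; a short case analysis using Lemma~\ref{separat} (with the subgroup $U$, or with $L$) pins down $Y$ and shows $L \le \rad(Y)$, so no information about $\mathcal A_S$ beyond the trivial $S$-ring is needed to reconstruct $Y$. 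Once every basic set is seen to be recoverable from the $U$-part and the $G/L$-part together with the trivial structure on $S$, the permutation-group wreath product reproduces $\mathcal A$ exactly and schurity follows. I would expect the write-up to lean on an explicit description of orbits of the point stabilizer in the wreathed permutation group, and the delicate step is verifying the orbit on $U \setminus L$ is a single orbit — which is where $|S| \le 2$ (so that $S$ contributes nothing) is indispensable.
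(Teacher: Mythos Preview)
The paper's proof is a two-line citation: since $|S|\le 2$, the group $\aut(\mathcal A_S)$ is trivial or $\sym(2)$, hence $2$-isolated in the sense of \cite{MP2}, and then \cite[Corollary~10.3]{MP2} gives schurity directly from the schurity of $\mathcal A_U$ and $\mathcal A_{G/L}$. Your proposal is in spirit an attempt to reprove that corollary in the special case $|S|\le 2$ by hand, which is a legitimate alternative route, but as written it contains two concrete errors that prevent it from going through.

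First, your claim that $U\setminus L$ is a single basic set is false. Over a group of order $2$ the only $S$-ring is $\mathbb Z S$, so yes, every basic set of $\mathcal A$ inside $U$ lies either in $L$ or in $U\setminus L$; but nothing forces $U\setminus L$ itself to be a single basic set. (Take $G=C_2\times C_4$, $U=C_4$, $L$ its subgroup of order $2$, and $\mathcal A_U=\mathbb Z U$: then $U\setminus L$ splits into two singletons.) Second, your case analysis of basic sets $Y$ with $Y\cap U\ne\varnothing$ and $Y\not\subseteq U$ is vacuous: $U$ is an $\mathcal A$-subgroup, hence a union of basic sets, so no basic set straddles $U$. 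These two mistakes show that you have not yet located the genuine obstruction.

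The real issue is a compatibility condition between the induced actions of $\aut(\mathcal A_U)$ and $\aut(\mathcal A_{G/L})$ on the common section $S$: to glue $K_1$ and $K_2$ into a group $K\le\sym(G)$ with $V(K,G)=\mathcal A$, one needs their restrictions to $S$ to agree (this is the content of the generalized wreath product of permutation groups in \cite{EP,MP2}). When $|S|\le 2$ the group $\sym(S)$ is so small that any two overgroups of $S_r$ in $\sym(S)$ giving the same $S$-ring on $S$ are automatically equal, so the compatibility comes for free; this is exactly what ``$2$-isolated'' encodes. If you want a self-contained argument, that is the mechanism you must make explicit---not an analysis of which subsets of $U\setminus L$ are basic.
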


\begin{proof}
The group $\aut(\mathcal{A}_S)$ is trivial or isomorphic to~$\sym(2)$. So $\aut(\mathcal{A}_S)$ is $2$-isolated (see~\cite{MP2} for the definition). Therefore $\mathcal{A}$ is schurian by~\cite[Corollary~10.3]{MP2}.
\end{proof}

\begin{lemm}\label{rank3}
An $S$-ring of rank~$3$ over a dihedral group is schurian.
\end{lemm}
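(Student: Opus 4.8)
The plan is to reduce the statement to a fact about strongly regular Cayley graphs over the dihedral group $G=D_{2n}$, settle the imprimitive case by an explicit description of the relevant automorphism group, and then exclude the primitive case.

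First I would examine the basic sets. If $\rk(\mathcal{A})=3$, put $\mathcal{S}(\mathcal{A})=\{\{e\},X_1,X_2\}$. The pair $\{X_1,X_2\}$ is invariant under $X\mapsto X^{-1}$, so either both sets are symmetric or $X_1^{-1}=X_2$; the latter forces $|X_1|=|X_2|$, impossible since $|X_1|+|X_2|=|G^\#|=2n-1$ is odd. Hence $X_1=X_1^{-1}$ and $X_2=X_2^{-1}$, and $\mathcal{A}$ is the $S$-ring of the strongly regular Cayley graph $\Gamma=\cay(G,X_1)$ (with complement $\cay(G,X_2)$). As $\aut(\mathcal{A})=\aut(\Gamma)\geq G_r$ and the orbits of $\aut(\Gamma)_e$ refine $\{\{e\},X_1,X_2\}$, the $S$-ring $\mathcal{A}$ is schurian precisely when $\aut(\Gamma)$ has rank~$3$ as a permutation group on $G$.

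Next I would treat the case where $\mathcal{A}$ has a proper nontrivial $\mathcal{A}$-subgroup $H$. As every $\mathcal{A}$-subgroup is a union of basic sets containing $e$, necessarily $H=\{e\}\cup X_i$ for some $i$; say $X_1=H\setminus\{e\}$. Then $\Gamma=\cay(G,H\setminus\{e\})$ is the disjoint union of the $[G:H]$ right cosets of $H$, each inducing a complete graph on $|H|\geq 2$ vertices, so $\aut(\Gamma)=\sym(H)\wr\sym(G/H)$ in its natural imprimitive action on $G$, and this group contains $G_r$ because $\Gamma$ is a Cayley graph over $G$. A direct check shows the orbits of the stabilizer of $e$ in $\aut(\Gamma)$ are $\{e\}$, $H\setminus\{e\}=X_1$, and $G\setminus H=X_2$, hence $V(\aut(\Gamma),G)=\mathcal{A}$ and $\mathcal{A}$ is schurian. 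This needs neither $H\unlhd G$ nor Lemma~\ref{schurwreath}, so it also covers the case where $H$ is a reflection subgroup of order~$2$.

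It remains to show that the primitive case — $\mathcal{A}$ of rank~$3$ with no proper nontrivial $\mathcal{A}$-subgroup — cannot occur, and this is the crux. We may assume $n\geq 3$, since for $n\leq 2$ one checks directly that such an $\mathcal{A}$ has an $\mathcal{A}$-subgroup of order~$2$. Let $C=\langle a\rangle\cong\mathbb Z_n$ be the cyclic subgroup of index~$2$ and fix a reflection $b$, so the nonidentity coset $Cb$ consists of involutions; then $C$ is not an $\mathcal{A}$-subgroup, and I would write $X_i=A_i\sqcup R_i$ with $A_i=X_i\cap C$ and $R_i=X_i\cap Cb$. Since $\langle X_i\rangle$ is an $\mathcal{A}$-subgroup it equals $G$, so $X_i\not\subseteq C$ and $R_i\neq\varnothing$. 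As a product of two reflections lies in $C$, if $A_i=\varnothing$ then $\underline{X_i}^{\,2}=\underline{R_i}^{\,2}$ is supported on $C$; expanding it in the three-dimensional algebra $\mathcal{A}$ (both of whose nonidentity basic sets meet $Cb$) forces $\underline{R_i}^{\,2}\in\mathbb Z\underline{\{e\}}$, hence $|R_i|=1$ and an $\mathcal{A}$-subgroup of order~$2$ — a contradiction. So each $X_i$ meets both $C$ and $Cb$ nontrivially, and from here I would push the analysis of the structure constants relating products of the $A_i$ and $R_i$, together with Lemma~\ref{separat} applied to $X_i$ and the subgroup $C$, to produce a proper nontrivial $\mathcal{A}$-subgroup, contradicting primitivity. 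Alternatively, one can appeal to the fact — in the spirit of the theory of B-groups, cf.~\cite[Section~25]{Wi} — that a dihedral group admits no primitive $S$-ring other than $\mathcal{T}_G$. Excluding a primitive rank-$3$ $S$-ring over a dihedral group is the main obstacle.
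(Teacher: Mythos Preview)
Your treatment of the imprimitive case is correct and self-contained; the paper handles it slightly differently, observing that with basic sets $\{e\}$, $H^{\#}$, $G\setminus H$ the $S$-ring $\mathcal{A}$ is isomorphic to the wreath product $\mathcal{T}_H\wr\mathcal{T}_{G/H}$ (\cite[Corollary~3.3]{MP2}) and then invoking Lemma~\ref{schurwreath}. Your direct computation of $\aut(\Gamma)=\sym(|H|)\wr\sym([G:H])$ and its point-stabilizer orbits achieves the same end without needing $H\unlhd G$, which is a small bonus.

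The real divergence is in the primitive case. Your combinatorial attack --- splitting $X_i$ across $C$ and $Cb$ and analyzing $\underline{X_i}^{\,2}$ --- correctly shows that each $X_i$ meets both cosets, but the concluding step (``from here I would push the analysis \ldots to produce a proper nontrivial $\mathcal{A}$-subgroup'') is not actually carried out, and completing it by hand is not entirely straightforward. The paper bypasses all of this in one line: it cites \cite[Theorem~25.6(i)]{Wi}, which says precisely that a dihedral group is a \emph{strong} B-group, so the only primitive $S$-ring over it is $\mathcal{T}_G$. You mention this fact only as a last-resort alternative; in the paper it is the entire argument for excluding the primitive case, and you should lead with it rather than treat it as a fallback. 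Once imprimitivity is known, the proof is two lines, and the parity argument for $X_i=X_i^{-1}$ and the $A_i,R_i$ analysis become unnecessary.
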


\begin{proof}
Let $\mathcal{A}$ be an $S$-ring of rank~$3$ over a dihedral group $G$. The group $G$ is a strong B-group (see~\cite[Theorem~25.6 (i)]{Wi}), i.e. only the trivial $S$-ring over $G$ is primitive. So $\mathcal{A}$ is imprimitive and hence there exists a proper nontrivial $\mathcal{A}$-subgroup $H$. Since $\rk(\mathcal{A})=3$, the basic sets of $\mathcal{A}$ are $\{e\}$, $H^\#$, and $G\setminus H$. Therefore $\mathcal{A}$ is isomorphic to the wreath product of $\mathcal{T}_H$ and $\mathcal{T}_{G/H}$ by~\cite[Corollary~3.3]{MP2}. Thus, $\mathcal{A}$ is schurian by Lemma~\ref{schurwreath}.
\end{proof}

\section{Central $S$-rings and generalized Schur groups}

A subset $X$ of $G$ is called \emph{normal} if $X$ is closed under conjugations by the elements of $G$ or, equivalently, if $X$ is a union of some conjugacy classes of $G$. The center $Z(\mathbb{Z}G)$ of the group ring $\mathbb{Z}G$ is an $S$-ring over $G$. The basic sets of $Z(\mathbb{Z}G)$ are exactly conjugacy classes of $G$ and $Z(\mathbb{Z}G)=V(G_r\Inn(G),G)=\cyc(\Inn(G),G)$. If $G$ is abelian, then $Z(\mathbb{Z}G)=\mathbb{Z}G$.

Following~\cite{CMP}, we say that an $S$-ring $\mathcal{A}$ over $G$ is \emph{central} if $\mathcal{A}\leq Z(\mathbb{Z}G)$. If $\mathcal{A}$ is central, then $\aut(\mathcal{A})\geq G_l$. The group $G$ is defined to be \emph{generalized Schur} if every central $S$-ring over $G$ is schurian. Clearly, if $G$ is abelian, then every $S$-ring over $G$ is central and hence schurity and generalized schurity are equivalent. In this section, we provide some basic properties of central $S$-rings and generalized Schur groups.

\begin{lemm}\label{normalsubgroup}
Let $\mathcal{A}$ be a central $S$-ring over a group $G$. Then every $\mathcal{A}$-subgroup of $G$ is normal.
\end{lemm}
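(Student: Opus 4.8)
The plan is to reduce the statement to the elementary observation that, in a central $S$-ring, every basic set — and hence every $\mathcal{A}$-set — is a normal subset of $G$, after which the conclusion is immediate from the definition of a normal subgroup.

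First I would recall, from the discussion preceding the lemma, that $Z(\mathbb{Z}G)$ is itself an $S$-ring over $G$ whose basic sets are precisely the conjugacy classes of $G$; equivalently, an element $\underline{X}$ of $\mathbb{Z}G$ lies in $Z(\mathbb{Z}G)$ if and only if $g\underline{X}g^{-1}=\underline{X}$ for all $g\in G$. Since $g\underline{X}g^{-1}=\underline{gXg^{-1}}$, this happens exactly when $gXg^{-1}=X$ for every $g\in G$, i.e. when $X$ is a normal subset of $G$ (a union of conjugacy classes).

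Next, since $\mathcal{A}$ is central we have $\mathcal{A}\leq Z(\mathbb{Z}G)$, so for any $\mathcal{A}$-set $X$ the element $\underline{X}$ lies in $Z(\mathbb{Z}G)$, and by the previous paragraph $X$ is a normal subset of $G$. In particular, if $H$ is an $\mathcal{A}$-subgroup then $H$ is an $\mathcal{A}$-set, hence a normal subset, so $gHg^{-1}=H$ for all $g\in G$. A subgroup with this property is by definition normal in $G$, which completes the argument.

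There is essentially no obstacle here: the statement is a direct unwinding of the definitions of \emph{central} and \emph{$\mathcal{A}$-subgroup} together with the standard description of $Z(\mathbb{Z}G)$. The only point worth stating carefully is the equivalence between $\underline{X}$ belonging to $Z(\mathbb{Z}G)$ and $X$ being a union of conjugacy classes, which is what transforms membership in a central ring into closure under conjugation.
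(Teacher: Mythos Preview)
Your argument is correct and is essentially the same as the paper's proof, just spelled out in more detail: the paper simply notes that since $\mathcal{A}$ is central, every $\mathcal{A}$-subgroup is a union of conjugacy classes of $G$ and hence normal.
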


\begin{proof}
Since $\mathcal{A}$ is central, every $\mathcal{A}$-subgroup is a union of some conjugacy classes of $G$ and hence normal.
\end{proof}

If $m$ is a positive integer and $X\subseteq G$, then put $X^{(m)}=\{x^m:~x\in X\}$. The following statement is a generalization of the first Schur theorem on multipliers~\cite[Theorem~23.9, (a)]{Wi}.

\begin{lemm}\cite[Theorem~1.2]{CMP}\label{centralburn}
Let $\mathcal{A}$ be a central $S$-ring over a group $G$ and $m$ an integer coprime to~$|G|$. Then $X^{(m)}\in \mathcal{S}(\mathcal{A})$ for every $X\in\mathcal{S}(\mathcal{A})$.
\end{lemm}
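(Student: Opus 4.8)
The plan is to reduce the statement to the classical first Schur theorem on multipliers via the center of the group ring, using the fact established above that $Z(\mathbb{Z}G)$ is itself an $S$-ring whose automorphism group contains $G_r\Inn(G)$. First I would recall that for any $S$-ring $\mathcal{B}$ over $G$ and any integer $m$ coprime to $|G|$, the $m$-th power map induces a ring automorphism of $\mathbb{Z}G$ (it sends $\underline{X}\mapsto\underline{X^{(m)}}$ on conjugacy-class sums; more to the point, the Schur--Wielandt theory tells us that whenever $\underline{X}\in\mathcal{B}$ and $\gcd(m,|G|)=1$, one has $\underline{X^{(m)}}\in\mathcal{B}$). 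The subtlety is that the classical theorem~\cite[Theorem~23.9]{Wi} is stated for $S$-rings over abelian groups, so the main work is to transport it to the nonabelian central setting; this is exactly what~\cite[Theorem~1.2]{CMP} does, and since the lemma is quoted verbatim from that paper, the ``proof'' here is simply a pointer to it.

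Since the statement is cited rather than proved, the expected obstacle is only expository: one must make sure the hypothesis ``$\mathcal{A}$ central'' is genuinely used. The key point is that a central $S$-ring sits inside $Z(\mathbb{Z}G)$, and $Z(\mathbb{Z}G)$ carries the structure of a commutative algebra on which the power-map argument of Wielandt can be run essentially unchanged: the basic sets of any central $\mathcal{A}$ are normal sets (unions of conjugacy classes), the map $X\mapsto X^{(m)}$ permutes conjugacy classes when $\gcd(m,|G|)=1$, and one checks — using the eigenvalue/idempotent description of $S$-subrings of a commutative semisimple algebra, exactly as in the abelian case — that this permutation preserves the partition $\mathcal{S}(\mathcal{A})$. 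If $\mathcal{A}$ were not central, $\mathcal{A}$ need not be closed under the power map, so centrality is essential.

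In short, the proof is: cite~\cite[Theorem~1.2]{CMP}. If a self-contained argument were wanted, I would (i) note $\mathcal{A}\le Z(\mathbb{Z}G)$ is a commutative semisimple $S$-ring, (ii) observe the $m$-th power map with $\gcd(m,|G|)=1$ induces an algebra automorphism $\sigma_m$ of $Z(\mathbb{Z}G)\otimes\mathbb{Q}$ permuting primitive idempotents, (iii) show $\sigma_m$ stabilizes $\mathcal{A}\otimes\mathbb{Q}$ by tracking rational characters, and (iv) conclude that $\sigma_m$ permutes the basic sets of $\mathcal{A}$, i.e. $X^{(m)}\in\mathcal{S}(\mathcal{A})$ for each $X\in\mathcal{S}(\mathcal{A})$. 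The one genuine difficulty in such a direct approach is step (iii), controlling how $\sigma_m$ interacts with the $\mathbb{Z}$-structure; but for our purposes the cited result suffices and no new argument is needed.
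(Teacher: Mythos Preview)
Your proposal is correct and matches the paper exactly: the lemma is quoted verbatim from~\cite[Theorem~1.2]{CMP} and no proof is given in the paper, so a bare citation is precisely what is expected. Your supplementary sketch of a self-contained argument via the commutative semisimple structure of $Z(\mathbb{Z}G)$ is a reasonable outline of how~\cite{CMP} proceeds, but none of it is needed here.
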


Let $G$ be a group and $A$ and $B$ are subgroups of $G$. The group $G$ is defined to be an (\emph{inner}) \emph{partial semidirect product} of $A$ and $B$ if $A$ is normal in $G$ and $G=AB$. The group $G$ is defined to be an \emph{(inner) central product} of $A$ and $B$ if every element of $A$ commutes with every element of $B$ and $G=AB$ (see~\cite[p.~25]{Gor} for the definitions).

\begin{lemm}\label{semidirect}
Every central $S$-ring over a partial semidirect product of two abelian groups is isomorphic to an $S$-ring over the central product of these groups.
\end{lemm}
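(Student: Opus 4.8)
The plan is to use the criterion recorded in Section~2.2: an $S$-ring $\mathcal{A}$ is isomorphic to an $S$-ring over a group $H$ if and only if $\aut(\mathcal{A})$ has a regular subgroup isomorphic to~$H$. Write $G=AB$ with $A$ normal in $G$ and $A,B$ abelian, and let $\widetilde{G}=(A\times B)/D$, where $D=\{(d,d^{-1}):d\in A\cap B\}$ and $A\cap B$ is regarded as a subgroup of $A$ and of $B$ via the inclusions coming from~$G$; thus $\widetilde{G}$ is the central product of $A$ and $B$. It then suffices to exhibit a regular subgroup of $\aut(\mathcal{A})$ isomorphic to~$\widetilde{G}$.

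Since $\mathcal{A}$ is central, $\aut(\mathcal{A})$ contains both $G_r$ and $G_l$. Let $A_l=\{a_l:a\in A\}$ be the group of left translations by the elements of $A$ and $B_r=\{b_r:b\in B\}$ the group of right translations by the elements of~$B$. Every left translation commutes with every right translation, so $A_l$ and $B_r$ centralize each other elementwise; hence $K:=A_lB_r$ is a subgroup of $\sym(G)$, and $K\leq\langle G_l,G_r\rangle\leq\aut(\mathcal{A})$. I would then verify that $K$ is regular on~$G$. The orbit of $e$ under $K$ equals $\{aeb:a\in A,\ b\in B\}=AB=G$, so $K$ is transitive; and a computation of $A_l\cap B_r$ gives $|K|=|G|$. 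Indeed, $a_l=b_r$ forces $a=b$ (apply the equality to~$e$), so $a\in A\cap B$, and then $ag=ga$ for all $g\in G$, i.e. $a\in Z(G)$; but $A\cap B$ centralizes both $A$ and $B$ (these are abelian) and $G=AB$, so $A\cap B\leq Z(G)$ automatically, whence $A_l\cap B_r=\{a_l:a\in A\cap B\}$ has order $|A\cap B|$ and $|K|=|A|\,|B|/|A\cap B|=|AB|=|G|$.

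It remains to identify $K$ abstractly. Since $A_l\cong A$, $B_r\cong B$, $[A_l,B_r]=1$ and $K=A_lB_r$, the group $K$ is an inner central product of $A_l$ and $B_r$; the description of $A_l\cap B_r$ above shows that, under the natural isomorphisms $A_l\cong A$ and $B_r\cong B$, this intersection corresponds to $A\cap B$ compatibly with both inclusions. Consequently the surjective multiplication map $A_l\times B_r\to K$ has kernel corresponding to~$D$, so $K\cong\widetilde{G}$. By the criterion above, $\mathcal{A}$ is therefore isomorphic to an $S$-ring over $\widetilde{G}$ (which, incidentally, is abelian, so the lemma in effect reduces such central $S$-rings to the abelian case).

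I do not expect a serious obstacle: the regular subgroup $K$ is essentially forced, and centrality of $\mathcal{A}$ enters only through the inclusion $G_l\leq\aut(\mathcal{A})$, which is precisely what allows left and right translations to be combined inside the automorphism group. The only point requiring genuine care is the bookkeeping around $A_l\cap B_r$ — in particular the (easy) observation $A\cap B\leq Z(G)$ — since this is what makes the orders match and what pins $K$ down as the central product of $A$ and $B$ rather than merely some central quotient of $A\times B$.
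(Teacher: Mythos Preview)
Your proposal is correct and follows essentially the same approach as the paper: exhibit $K=A_lB_r\leq\aut(\mathcal{A})$ as a regular subgroup and identify it with the central product. The only notable difference is that the paper obtains regularity more quickly by observing that $K$ is abelian (since $A_l$, $B_r$ are abelian and commute elementwise), and a transitive abelian permutation group is automatically regular---this bypasses your order computation via $A_l\cap B_r$, though your more explicit bookkeeping has the side benefit of pinning down the intersection and hence the precise isomorphism type of the central product.
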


\begin{proof}
Let $\mathcal{A}$ be a central $S$-ring over a group $G$. Suppose that $G$ is a partial semidirect product of its abelian subgroups $A$ and $B$. Clearly, every element of $A_l$ commutes with every element of $B_r$. So $K=A_lB_r$ is a central product of $A_l$ and $B_r$. Since $\mathcal{A}$ is central, $K\leq \aut(\mathcal{A})$. Observe that $K$ is transitive on $G$ because $G=AB$. Since $A$ and $B$ are abelian, $K$ is also abelian. Therefore $K$ is regular. Thus, $\aut(\mathcal{A})$ has a regular subgroup isomorphic to the central product of $A$ and $B$ and hence $\mathcal{A}$ is isomorphic to an $S$-ring over the central product of $A$ and $B$. 
\end{proof}

Lemma~\ref{semidirect} implies, in particular, that every central $S$-ring over a semidirect product of two abelian groups is isomorphic to an $S$-ring over the direct product of them.

If $H$ is a subgroup of $G$ and $h\in H$, then the conjugacy class of $H$ containing~$h$ is denoted by~$\Cla(h,H)$.

\begin{lemm}\label{quotient}
Let $G$ be a generalized Schur group and $H$ a normal subgroup of~$G$. Then $G/H$ is generalized Schur and $H$ is generalized Schur whenever $\Cla(h,H)=\Cla(h,G)$ for every $h\in H$.
\end{lemm}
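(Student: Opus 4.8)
\textbf{Proof plan for Lemma~\ref{quotient}.}
The plan is to handle the two assertions separately, using in both cases the standard correspondence between $S$-rings and permutation groups together with the transitivity-module description of schurity. For the quotient $G/H$: given a central $S$-ring $\mathcal{B}$ over $G/H$, I would first pull it back along the canonical epimorphism $\pi\colon G\to G/H$ to an $S$-ring $\mathcal{A}$ over $G$ whose basic sets are the full $\pi$-preimages of the basic sets of $\mathcal{B}$. One checks that $\mathcal{A}$ is central (each preimage is a union of conjugacy classes of $G$, since $H$ is normal and each basic set of $\mathcal{B}$ is a union of conjugacy classes of $G/H$), that $H$ is an $\mathcal{A}$-subgroup, and that $\mathcal{B}=\mathcal{A}_{G/H}$. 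Since $G$ is generalized Schur, $\mathcal{A}=V(K,G)$ for some $K\geq G_r$; then $\mathcal{B}=\mathcal{A}_{G/H}$ is schurian because the $S$-ring induced on a quotient by a normal $\mathcal{A}$-subgroup of a schurian $S$-ring is again schurian (the induced group $K^{G/H}$ acts on $G/H$ and realizes $\mathcal{B}$ as a transitivity module). This last fact is the routine "quotients of schurian $S$-rings are schurian'' statement; I would cite it from~\cite{MP2} rather than reprove it.

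For the subgroup $H$ under the hypothesis $\Cla(h,H)=\Cla(h,G)$ for all $h\in H$: given a central $S$-ring $\mathcal{C}$ over $H$, I want to produce a central $S$-ring $\mathcal{A}$ over $G$ with $\mathcal{A}_H=\mathcal{C}$, so that schurity of $\mathcal{A}$ (guaranteed since $G$ is generalized Schur) descends to $\mathcal{C}$ via the fact that the $S$-ring induced on an $\mathcal{A}$-subgroup of a schurian $S$-ring is schurian. The natural candidate is the "coset extension'': take as basic sets of $\mathcal{A}$ the basic sets of $\mathcal{C}$ together with the conjugacy classes of $G$ lying outside $H$; equivalently, $\mathcal{A}=\mathcal{C}\wr$-type construction, or more precisely $\mathcal{A}$ is the $S$-ring with basic sets $\mathcal{S}(\mathcal{C})\cup\{\text{conjugacy classes of }G \text{ not meeting } H\}$. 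The hypothesis $\Cla(h,H)=\Cla(h,G)$ is exactly what guarantees that the conjugacy classes of $G$ meeting $H$ lie entirely inside $H$, so that this is a genuine partition of $G$; it also guarantees that each basic set of $\mathcal{C}$, being a union of $H$-classes, is a union of $G$-classes, hence the resulting $S$-ring is central. One must verify the $S$-ring axioms — closure under inverses is clear, and closure under multiplication follows because structure constants involving a basic set outside $H$ and anything are controlled by the fact that $\underline{G\setminus H}=\underline{G}-\underline{H}$ lies in the span, while products of basic sets inside $H$ stay inside $\mathbb{Z}H\cap\mathcal{A}$, which equals $\mathcal{C}$ by construction. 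Then $\mathcal{A}_H=\mathcal{C}$ by design, and schurity of $\mathcal{A}$ gives schurity of $\mathcal{C}$.

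The main obstacle I anticipate is verifying that the coset-extension construction for the subgroup case actually yields an $S$-ring — specifically, checking that $\mathbb{Z}H\cap\mathcal{A}=\mathcal{C}$ (not something larger) and that the product of two basic sets always lies in the $\mathbb{Z}$-span of the chosen partition. The hypothesis $\Cla(h,H)=\Cla(h,G)$ should make this clean: it forces $G\setminus H$ to be a union of $G$-conjugacy classes and forces every $G$-class meeting $H$ to be an $H$-class, so the partition of $G$ into $\mathcal{S}(\mathcal{C})$-sets and outside-classes is compatible with the class partition of $Z(\mathbb{Z}G)$, and the needed closure properties follow from those of $\mathcal{C}$ and of $Z(\mathbb{Z}G)$. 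A secondary point to be careful about is that "induced $S$-ring on an $\mathcal{A}$-subgroup of a schurian $S$-ring is schurian'' — this is standard (one restricts the permutation group $K$ to the block, or uses the point stabilizer argument), and I would invoke it from the cited literature.
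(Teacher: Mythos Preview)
Your approach for the quotient $G/H$ is essentially the paper's, modulo one slip: the literal pullback along $\pi$ does not give an $S$-ring, since the preimage of $\{e_{G/H}\}$ is all of $H$, so axiom~(1) fails. The fix is to split $H$ into $\{e\}$ and $H^{\#}$; the resulting $S$-ring is exactly $\mathcal{T}_H\wr\mathcal{B}$, which is what the paper uses. The paper then argues by contradiction (if $\mathcal{B}$ is nonschurian then so is $\mathcal{T}_H\wr\mathcal{B}$, by Lemma~\ref{schurwreath}), which is logically equivalent to your forward direction via ``quotients of schurian $S$-rings are schurian''.

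For the subgroup $H$, however, your explicit construction is wrong. Taking as basic sets those of $\mathcal{C}$ together with the \emph{individual} conjugacy classes of $G$ outside $H$ need not produce an $S$-ring: if $X,Y$ are classes outside $H$, then $\underline{X}\,\underline{Y}$ is a $\mathbb{Z}$-combination of $G$-classes, and its component supported on $H$ is a combination of $G$-classes inside $H$ (equivalently $H$-classes, by hypothesis) with no reason to lie in $\mathcal{C}$. Concretely, take $G=C_p\times C_p$ with $p$ an odd prime, $H=C_p\times\{e\}$, and $\mathcal{C}=\mathcal{T}_H$. The hypothesis $\Cla(h,H)=\Cla(h,G)$ holds trivially, your proposed partition consists of $\{e\}$, $H^{\#}$, and the singletons outside $H$, and the product of two suitable singletons from $G\setminus H$ is a single nonidentity element of $H$, which is not in the $\mathbb{Z}$-span of $\underline{\{e\}}$ and $\underline{H^{\#}}$. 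Your verification sketch (``controlled by the fact that $\underline{G\setminus H}$ lies in the span'') does not address this case.

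The paper instead takes $\mathcal{A}=\mathcal{C}\wr\mathcal{T}_{G/H}$, i.e.\ replaces all the outside classes by the single basic set $G\setminus H$. This is automatically an $S$-ring (wreath products of $S$-rings always are), it is central because $G\setminus H$ and each basic set of $\mathcal{C}$ are unions of $G$-classes under the hypothesis, and Lemma~\ref{schurwreath} transfers (non)schurity between $\mathcal{A}$ and $\mathcal{C}$ directly, with no need to invoke the ``restriction to an $\mathcal{A}$-subgroup of a schurian $S$-ring is schurian'' fact.
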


\begin{proof}
At first, let us prove that $G/H$ is generalized Schur. Assume the contrary. Then there exists a nonschurian central $S$-ring $\mathcal{B}$ over $G/H$. Put $\mathcal{A}=\mathcal{T}_H\wr \mathcal{B}$. From Lemma~\ref{schurwreath} it follows that $\mathcal{A}$ is nonschurian. Further we will prove that every $X\in\mathcal{S}(\mathcal{A})$ is normal and hence $\mathcal{A}$ is central. The required is clear if $X=\{e\}$ or $X=H^\#$. Suppose that $X\subseteq G\setminus H$. Let $x\in X$ and $g\in G$. Then 
$$g^{-1}xg\in Hg^{-1}xg=Hg^{-1}HxHg\in X^\pi,$$
where $\pi:G\rightarrow G/H$ is the canonical epimorphism and the latter inclusion holds because $\mathcal{A}_{G/H}\cong \mathcal{B}$ is central. Since $H\leq \rad(X)$, the above equality implies that $g^{-1}xg\in X$. Therefore $X$ is normal. Thus, $\mathcal{A}$ is nonschurian central $S$-ring over $G$, a contradiction to generalized schurity of $G$.

Now let us prove that $H$ is generalized Schur whenever $\Cla(h,H)=\Cla(h,G)$ for every $h\in H$. Assume the contrary. Then there exist a nonschurian central $S$-ring $\mathcal{B}$ over $H$. Put $\mathcal{A}=\mathcal{B}\wr \mathcal{T}_{G/H}$. The $S$-ring $\mathcal{A}$ is nonschurian by Lemma~\ref{schurwreath} and it is central by the condition of the lemma. So we obtain a contradiction to generalized schurity of $G$.
\end{proof}

\begin{rem1}
Note that the condition $\Cla(h,H)=\Cla(h,G)$ for every $h\in H$ in Lemma~\ref{quotient} is essential. In general, a normal subgroup of a generalized Schur group can be not generalized Schur (see Example~$3$).
\end{rem1}

\vspace{5mm}

\noindent \textbf{Example 1.} The alternating group $A_5$ is generalized Schur. 

\vspace{2mm}

\noindent Let us prove that every central $S$-ring $\mathcal{A}$ over a group $G=A_5$ is schurian. Lemma~\ref{normalsubgroup} implies that every $\mathcal{A}$-subgroup is normal in $G$. Since $G$ is simple, $\mathcal{A}$ is primitive, i.e. there are no proper nontrivial $\mathcal{A}$-subgroups. Let $X_{\max}$ be a basic set of $\mathcal{A}$ of maximal size. From~\cite[Theorem~3.1.8 (2)]{CP} it follows that 
\begin{equation}\label{coprime}
\GCD(|X_{\max}|,|X|)>1
\end{equation}
for every $X\in \mathcal{S}(\mathcal{A})$. 

The group $G$ has~$5$ conjugacy classes of sizes~$1$, $12$, $12$, $15$, $20$. Two conjugacy classes of size~$12$ consist of cycles of length~$5$ and its union is a conjugacy class of the symmetric group $S_5$. Conjugacy classes of sizes $15$ and $20$ are also conjugacy classes of $S_5$.  Clearly, $\mathcal{N}(Z(\mathbb{Z}G))=\{1,12,12,15,20\}$ and $\rk(Z(\mathbb{Z}G))=5$. Since $\mathcal{A}\leq Z(\mathbb{Z}G)$, we conclude that $\rk(\mathcal{A})\in \{2,3,4,5\}$. The $S$-ring $\mathcal{A}$ is obviously schurian if $\rk(\mathcal{A})=2$ or $\rk(\mathcal{A})=5$. Suppose that $\rk(\mathcal{A})=3$. Then 
$$\mathcal{N}(\mathcal{A})\in \{\{1,12,47\},\{1,15,44\},\{1,20,39\}\}.$$
In all cases we have a contradiction to Eq.~\eqref{coprime}. 

Now suppose that $\rk(\mathcal{A})=4$. Then an easy enumeration of all possibilities implies that $\mathcal{N}(\mathcal{A})$ coincides with one of the following sets:
$$\{1,12,12,35\},\{1,12,20,27\},\{1,12,15,32\}, \{1,15,20,24\}.$$
In the first three cases we have a contradiction to Eq.~\eqref{coprime}. In the latter case the basic sets of $\mathcal{A}$ are exactly conjugacy classes of $S_5$ inside $G=A_5$. So $\mathcal{A}=\cyc(\aut(G),G)$ and hence $\mathcal{A}$ is schurian.
\vspace{5mm}

Note that all $S$-rings over $A_5$ were enumerated in~\cite{KZ} and it is possible to check that $A_5$ is generalized Schur using this result.

\section{Proof of Theorem~\ref{main1}}

Firstly, let us prove Statement~$1$. Let $G$ be a noncyclic generalized Schur $p$-group for some prime~$p$. Since $G$ is noncyclic, the group $G/\Phi (G)$, where $\Phi(G)$ is the Frattini subgroup of $G$, is an elementary abelian group of rank at least~$2$. Lemma~\ref{quotient} implies that $G/\Phi (G)$ is generalized Schur and hence Schur. However, an elementary abelian $p$-group of rank at least~$2$ is Schur only if $p\in\{2,3\}$ by~\cite{Po}. 

Now let us prove Statement~$2$. Let $p\in\{2,3\}$, $k\geq 1$, and $G$ a group of order $p^k$ with a maximal cyclic subgroup. If $G$ is abelian, then $G$ is cyclic or isomorphic to $C_{p^{k-1}}\times C_p$, where $C_n$ denotes a cyclic group of order~$n$. In the former case, $G$ is Schur by~\cite{Po}, whereas in the latter case by~\cite[Theorem~1.1]{MP2} if $p=2$ and by~\cite[Theorem~1.1]{Ry1} if $p=3$. 

Suppose that $G$ is nonabelian. Then $k\geq 3$ and $G$ is isomorphic to one of the groups from~\cite[p.~193, Theorem~4.4]{Gor}. Each of these groups except the generalized quaternion group $Q_{2^k}$ of order $2^k$ is a semidirect product of two cyclic subgroups of orders $p^{k-1}$ and $p$ and $Q_{2^k}$ is a partial semidirect product of two cyclic subgroups of orders $2^{k-1}$ and $4$ with intersection of order~$2$. It is easy to check that in all cases the central product of the above mentioned cyclic subgroups is isomorphic to $C_p\times C_{p^{k-1}}$. Therefore every central $S$-ring over $G$ is isomorphic to an $S$-ring over $C_{p^{k-1}}\times C_p$ by Lemma~\ref{semidirect} and hence schurian  by~\cite[Theorem~1.1]{MP2} if $p=2$ and by~\cite[Theorem~1.1]{Ry2} if $p=3$. Thus, $G$ is generalized Schur.

\section{Proof of Theorem~\ref{main2}}
 We start with the proposition which describes the structure of central $S$-rings over Camina groups.

\begin{prop}\label{3wreath}
Let $(G,H)$ be a Camina pair and $\mathcal{A}$ a central $S$-ring over $G$. Then 
$$\mathcal{A}=(\mathcal{A}_L\wr \mathcal{T}_{U/L})\wr \mathcal{A}_{G/U}$$
for some normal $\mathcal{A}$-subgroups $L$ and $U$ such that $\{e\}\leq L\leq H \leq U \leq G$.
\end{prop}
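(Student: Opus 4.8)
The plan is to analyze how a central $S$-ring $\mathcal{A}$ over a Camina group $G$ interacts with the distinguished normal subgroup $H$, exploiting the fact that every $H$-coset $\neq H$ is a full conjugacy class of $G$. First I would set $U=\langle H, X : X \in \mathcal{S}(\mathcal{A}),\ X\cap H\neq\varnothing\rangle$ and $L$ the subgroup generated by all basic sets contained in $H$ (equivalently $L = \langle X : X\in\mathcal{S}(\mathcal{A}),\ X\subseteq H\rangle$); both are $\mathcal{A}$-subgroups, hence normal by Lemma~\ref{normalsubgroup}, and clearly $\{e\}\leq L\leq H\leq U\leq G$ provided I check $L\leq H$, which holds because any basic set meeting $H$ and not contained in $H$ would have to be dealt with separately — so I first verify the dichotomy that every basic set is either inside $H$, inside $U\setminus L$, or outside $U$.

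The key structural input is: for $x\in G\setminus H$, the conjugacy class $\Cla(x,G)$ equals the whole coset $Hx$ (this is the Camina condition, using that $\mathcal{A}\leq Z(\mathbb{Z}G)$ so basic sets outside $H$ are unions of such classes, i.e.\ unions of $H$-cosets). Hence every basic set $X$ with $X\not\subseteq H$ is a union of $H$-cosets, so $H\leq\rad(X)$ for all such $X$. This immediately shows $U/H$ is an $\mathcal{A}_{G/H}$-section on which $\mathcal{A}$ restricts, and — more to the point — that the only basic sets whose radical might fail to contain $H$ are those inside $H$ itself. To get the finer $L\leq\rad(X)$ for basic sets $X$ outside $U$, I would apply Lemma~\ref{separat}: for such an $X$, take the subgroup $H$ in the lemma; since $X\subseteq G\setminus H$ already has $H\leq\rad(X)$ we actually want the separation argument with $U$ or $L$ in place of $H$ to conclude $L\leq\rad(X)$ whenever $X\cap U=\varnothing$, forcing the generalized wreath decomposition over the section $U/L$. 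Concretely, a basic set $X$ outside $U$ is a union of $H$-cosets; I must show it is a union of $L$-cosets, which follows because $\rad(X)$ is an $\mathcal{A}$-subgroup containing $H$, and — by the definition of $U$ as generated by $H$ together with all basic sets meeting $H$ — the structure forces $L\leq\rad(X)$; here the precise bookkeeping of which basic sets generate $U$ versus $L$ is what makes it work.

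Then I would record that $\mathcal{A}_H$ has the shape $\mathcal{A}_L\wr\mathcal{T}_{H/L}$: indeed $H$ is an $\mathcal{A}$-subgroup, $L\leq H$ is an $\mathcal{A}$-subgroup, every basic set inside $H$ but not inside $L$ must — again by Lemma~\ref{separat} applied with the pair $(X,L)$, or directly because such a set is a union of $L$-cosets forming a single class modulo $L$ — be the full complement $H\setminus L$ of $L$ in $H$ up to radical, giving the wreath structure $\mathcal{A}_H = \mathcal{A}_L\wr\mathcal{T}_{H/L}$, and symmetrically $\mathcal{A}_U = \mathcal{A}_L\wr\mathcal{T}_{U/L}$ after checking that between $H$ and $U$ all basic sets have radical containing $H\geq L$ while those in $U\setminus H$ sit in a trivial-on-$U/H$ pattern. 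Combining: basic sets split as (i) inside $L$, contributing $\mathcal{A}_L$; (ii) inside $U\setminus L$, each a union of $L$-cosets with trivial quotient pattern on $U/L$, contributing $\mathcal{T}_{U/L}$; (iii) outside $U$, each with $L\leq\rad$, giving the outer $\mathcal{A}_{G/U}$ via the $U/L$-generalized wreath product which here collapses to an honest iterated wreath product $(\mathcal{A}_L\wr\mathcal{T}_{U/L})\wr\mathcal{A}_{G/U}$ because $L$ and $U$ are both $\mathcal{A}$-subgroups normal in $G$.

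The main obstacle I anticipate is pinning down precisely why the basic sets strictly between $L$ and $H$ (and between $H$ and $U$) collapse to give the \emph{trivial} $S$-ring $\mathcal{T}_{U/L}$ on that middle section rather than something larger: this is where the Camina hypothesis must be squeezed hardest, via the observation that elements of $H\setminus L$ are not individually $\mathcal{A}$-distinguishable from each other once we quotient by $L$ — essentially every coset of $L$ in $H$ other than $L$ lies in one basic set — together with a careful application of Lemma~\ref{separat} to rule out any basic set simultaneously meeting $H$ and its complement in a non-radical way. Everything else is routine verification that $L$, $U$ are $\mathcal{A}$-subgroups normal in $G$ (Lemma~\ref{normalsubgroup}) and that the generalized wreath product over $U/L$ degenerates to the stated iterated wreath product.
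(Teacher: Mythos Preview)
Your overall strategy --- define $U$ as the smallest $\mathcal{A}$-subgroup containing $H$ and $L$ as the largest $\mathcal{A}$-subgroup inside $H$, then verify the iterated wreath decomposition --- is the same as the paper's, and your $U,L$ do coincide with the paper's. But the execution has real gaps.

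\textbf{(1) You assert that $H$ is an $\mathcal{A}$-subgroup.} It need not be, and this is precisely the interesting case. When $H$ is an $\mathcal{A}$-subgroup the whole statement is immediate with $L=U=H$. When it is not, there is a basic set $X$ with $X\cap H\neq\varnothing$ and $X\setminus H\neq\varnothing$, and everything hinges on analyzing this $X$. Your sentence ``indeed $H$ is an $\mathcal{A}$-subgroup'' and the discussion of $\mathcal{A}_H$ are simply wrong in that case; $\mathcal{A}_H$ is not even defined.

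\textbf{(2) The outer wreath product needs $U\leq\rad(Y)$, not merely $L\leq\rad(Y)$.} For a basic set $Y$ outside $U$ you correctly note that $H\leq\rad(Y)$ (Camina condition) and that $\rad(Y)$ is an $\mathcal{A}$-subgroup containing $H$; the conclusion you should draw is $U\leq\rad(Y)$, since $U$ is \emph{minimal} with that property. You instead write $L\leq\rad(Y)$, which is trivially true and too weak. Your subsequent claim that a $U/L$-generalized wreath product ``collapses to an honest iterated wreath product\ldots because $L$ and $U$ are both $\mathcal{A}$-subgroups normal in $G$'' is a non-sequitur: normality of $L,U$ does not force $U\leq\rad(Y)$.

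\textbf{(3) The middle piece $\mathcal{T}_{U/L}$.} You flag this as the main obstacle but do not resolve it. The paper's device is exactly the one you gesture at: apply Lemma~\ref{separat} to the basic set $X$ that crosses $H$. Since $H\leq\rad(X\setminus H)$ by the Camina condition, the lemma gives $X=\langle X\rangle\setminus\rad(X)$ with $\rad(X)\leq H$. Setting $U=\langle X\rangle$ and $L=\rad(X)$, one has $U\setminus L=X$ is a \emph{single} basic set, so $\mathcal{A}_U=\mathcal{A}_L\wr\mathcal{T}_{U/L}$ is immediate. This single application of Lemma~\ref{separat} simultaneously produces $U$, $L$, and the triviality on $U/L$; you never carry it out, and without it there is no argument that $U\setminus L$ is one basic set rather than several.

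A minor point: in your definition of $U$ you throw in $H$ as a generator, which obscures why $U$ is an $\mathcal{A}$-subgroup (since $H$ need not be an $\mathcal{A}$-set). In fact $H\leq\langle X\rangle$ automatically once $X$ crosses $H$, so the explicit $H$ is redundant --- but you should say so.
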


\begin{lemm}\label{caminbasic}
In the notations of Proposition~\ref{3wreath}, let $X\in \mathcal{S}(\mathcal{A})$ such that $X_1=X\cap (G\setminus H)\neq \varnothing$. Then $H\leq \rad(X_1)$.
\end{lemm}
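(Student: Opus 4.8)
The plan is to analyze the basic set $X \in \mathcal{S}(\mathcal{A})$ using the Camina pair property together with centrality of $\mathcal{A}$. Recall that $(G,H)$ being a Camina pair means every coset $Hg$ with $g \notin H$ is a single conjugacy class of $G$. First I would fix an element $x \in X_1 = X \cap (G \setminus H)$ and consider the coset $Hx$. Since $x \notin H$, the Camina condition gives that $Hx = \Cla(x,G)$, the whole conjugacy class of $x$ in $G$. Because $\mathcal{A}$ is central, $X$ is a normal set, hence a union of conjugacy classes of $G$; in particular $\Cla(x,G) \subseteq X$, so $Hx \subseteq X$, and since $Hx \subseteq G \setminus H$ we get $Hx \subseteq X_1$. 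This shows $X_1$ is a union of cosets of $H$, i.e. $X_1 = H\cdot X_1$ on the left, and taking inverses (noting $X_1^{-1} \subseteq X^{-1} \cap (G\setminus H)$ and $X^{-1}$ is again a normal basic set) one likewise gets $X_1 = X_1 \cdot H$ on the right, or one can argue directly that $Hx = xH$ as sets since $\Cla(x,G)$ is closed under conjugation. Either way, $H \leq \rad(X_1)$ follows from $HX_1 = X_1 H = X_1$.

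More carefully, to get $X_1 H = X_1$ as well as $HX_1 = X_1$: for $x \in X_1$ and $h \in H$, the element $xh$ lies in $xH = Hx$ (this equality holds because $Hx$, being a conjugacy class of $G$, is invariant under conjugation by $x$, so $x(Hx)x^{-1} = Hx$, i.e. $xH x^{-1} \cdot x = Hx$, giving $xH = Hx$). Hence $xh \in Hx \subseteq X_1$, so $X_1 H \subseteq X_1$, and equality follows by cardinality. Combined with $HX_1 = X_1$ from the previous paragraph, we conclude $H \subseteq \rad(X_1) = \{g \in G : gX_1 = X_1 g = X_1\}$, which is exactly the claim.

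I do not anticipate a serious obstacle here; the only point requiring a little care is making sure one is entitled to pass from "$\Cla(x,G) \subseteq X$" to "$\Cla(x,G) \subseteq X_1$", which is immediate since $\Cla(x,G) = Hx \subseteq G \setminus H$ when $x \notin H$, and $X_1 = X \cap (G \setminus H)$. One should also note at the outset that $X \cap (G \setminus H) \neq \varnothing$ together with $\mathcal{A}$ being central forces $X \subseteq G \setminus H$ entirely (since $X$ is a union of conjugacy classes, and a conjugacy class meeting $G \setminus H$ is contained in $G\setminus H$ as $H$ is normal), so in fact $X_1 = X$; but this refinement is not needed for the stated conclusion. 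If desired, one can instead invoke Lemma~\ref{separat} with the roles of $X\cap H$ and $X\setminus H$, but the direct Camina argument above is cleaner and self-contained.
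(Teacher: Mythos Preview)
Your approach is essentially the same as the paper's: observe that $X_1$ is a union of conjugacy classes (since $\mathcal{A}$ is central and $H\trianglelefteq G$), apply the Camina condition to see each such class contains the $H$-coset of each of its elements, and conclude $H\leq\rad(X_1)$.

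Two small corrections are worth noting. First, the Camina condition as stated in the paper gives only $Hx\subseteq\Cla(x,G)$ for $x\notin H$, not equality; your argument survives this intact, since the inclusion $Hx\subseteq\Cla(x,G)\subseteq X$ is all you use. Second, your derivation of $xH=Hx$ via ``$Hx$ is a conjugacy class, hence invariant under conjugation by $x$'' is unnecessary (and rests on the unjustified equality): $H$ is normal in $G$, so $xH=Hx$ immediately, and hence $X_1H=HX_1=X_1$ follows at once from $HX_1=X_1$.

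Finally, your closing remark that centrality forces $X\subseteq G\setminus H$ (so $X_1=X$) is not correct: a basic set of $\mathcal{A}$ is a union of conjugacy classes, but nothing prevents it from containing classes both inside and outside $H$. Indeed, Proposition~\ref{3wreath} explicitly treats the case $X\cap H\neq\varnothing$ and $X\cap(G\setminus H)\neq\varnothing$. As you say, though, this aside is not used in your argument.
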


\begin{proof}
The set $X_1$ is a union of some conjugacy classes of $G$ because $\mathcal{A}$ is central and $H$ is normal in $G$. Since $G$ is a Camina group, each conjugacy class of~$G$ lying outside $H$ is a union of some $H$-cosets. Therefore $H\leq \rad(X_1)$.
\end{proof}

\begin{proof}[Proof of Proposition~\ref{3wreath}]
At first, suppose that $H$ is an $\mathcal{A}$-subgroup. Then $H\leq \rad(X)$ for every $X\in \mathcal{S}(\mathcal{A})$ outside~$H$ by Lemma~\ref{caminbasic}. Therefore $\mathcal{A}=\mathcal{A}_H\wr\mathcal{A}_{G/H}$ and the proposition holds for $L=U=H$.   

Now suppose that $H$ is not an $\mathcal{A}$-subgroup. Then there exists a basic set $X\in \mathcal{S}(\mathcal{A})$ such that $X_0=X\cap H\neq \varnothing$ and $X_1=X\cap (G\setminus H)\neq \varnothing$. Due to Lemma~\ref{caminbasic}, we have $H\leq \rad(X_1)$. So all conditions of Lemma~\ref{separat} hold for $X$ and $H$. Thus, 
\begin{equation}\label{sep2}
X=U \setminus L~\text{and}~L\leq H,
\end{equation}
where $U=\langle X \rangle$ and $L=\rad(X)$. 

Clearly, $U$ and $L$ are $\mathcal{A}$-subgroups. Moreover, they are normal in~$G$ by Lemma~\ref{normalsubgroup}. Note that $L<H$ because $L$ is an $\mathcal{A}$-subgroup whereas $H$ is not an $\mathcal{A}$-subgroup by the assumption. On the other hand, $H\leq \rad(X_1)\leq U$. Together with Eq.~\eqref{sep2}, this implies that $U$ is the minimal $\mathcal{A}$-subgroup containing~$H$. If $Y\in\mathcal{S}(\mathcal{A})$ lies outside $U$, then $H\leq \rad(Y)$ by Lemma~\ref{caminbasic} and hence $U\leq \rad(Y)$. Therefore
\begin{equation}\label{wr1}
\mathcal{A}=\mathcal{A}_U \wr \mathcal{A}_{G/U}.
\end{equation}
From Eq.~\eqref{sep2} it follows that $\mathcal{A}_U=\mathcal{A}_L\wr \mathcal{T}_{U/L}$. Now Eq.~\eqref{wr1} yields $\mathcal{A}=(\mathcal{A}_L\wr \mathcal{T}_{U/L})\wr \mathcal{A}_{G/U}$ and we are done.
\end{proof}

\begin{proof}[Proof of Theorem~\ref{main2}]
Assume the contrary that $G$ is not a generalized Schur group. Then there exists a nonschurian central $S$-ring $\mathcal{A}$ over~$G$. By Proposition~\ref{3wreath}, we have $\mathcal{A}=(\mathcal{A}_L\wr \mathcal{T}_{U/L})\wr \mathcal{A}_{G/U}$ for some normal $\mathcal{A}$-subgroups $L$ and $U$ such that $\{e\}\leq L\leq H \leq U \leq G$. Since $\mathcal{A}$ is nonschurian, $L>\{e\}$ and $\mathcal{A}_L$ is nonschurian or $U<G$ and $\mathcal{A}_{G/U}$ is nonschurian by Lemma~\ref{schurwreath}. 

Suppose that $L>\{e\}$ and $\mathcal{A}_L$ is nonschurian. Then $\mathcal{A}_L\wr \mathcal{T}_{H/L}$ is a nonschurian $S$-ring over~$H$ by Lemma~\ref{schurwreath}. Every basic set of $\mathcal{A}_L$ is a union of some conjugacy classes of~$G$ which lie inside $H$ because $\mathcal{A}$ is central. So $\mathcal{A}_L\wr \mathcal{T}_{H/L}$ is central and we obtain a contradiction to generalized schurity of $H$.

Now suppose that $U<G$ and $\mathcal{A}_{G/U}$ is nonschurian. Clearly, $\mathcal{A}_{G/U}$ is central. So $G/U$ and hence $(G/H)/(U/H)$ is not generalized Schur. Together with the first part of Lemma~\ref{quotient}, this yields that $G/H$ is not generalized Schur, a contradiction to generalized schurity of $G/H$.
\end{proof}

Corollary~\ref{pq} implies that there exist infinitely many generalized Schur Camina groups. The following example shows that there exist infinitely many Camina groups which are not generalized Schur.

\vspace{5mm}

\noindent \textbf{Example 2.} Let $p$, $q$, $r$, and $s$ be distinct odd primes. By the Dirichlet theorem, there exist infinitely many primes~$n$ of the form $n=kpqrs+1$, where $k$ is a positive integer. Let $G\cong C_n\rtimes \aut(C_n)$. Then $G$ is a Frobenius and hence a Camina group. The group $\aut(C_n)$ is a cyclic group of order $n-1=kpqrs$. From~\cite[Theorem~1.1]{EKP1} it follows that $\aut(C_n)$ is not Schur. Therefore $G$ is not generalized Schur by the first part of Lemma~\ref{quotient}.

\vspace{5mm}

The example below yields that a normal subgroup of a generalized Schur group can be not generalized Schur. This example also shows that the statement reverse to Theorem~\ref{main2} does not hold in general. 

\vspace{5mm}

\noindent \textbf{Example 3.} Let $p$ be a prime, $k\geq 2$, and $P\cong C_p^k$. Suppose that $\sigma \in \aut(P)\cong \GL(k,p)$ is a Singer cycle. Put $G=P\rtimes \langle \sigma \rangle$. Since $\sigma$ is a Singer cycle, $P^\#$ is a conjugacy class of~$G$. Observe that $G$ is a Frobenius whose Frobenius kernel is~$P$ and hence $(G,P)$ is a Camina pair. This implies that each conjugacy class of~$G$ outside $P$ is a union of some $P$-cosets. The group $G/P$ is cyclic and hence each conjugacy class of~$G$ outside~$P$ is exactly a $P$-coset. 

Let $\mathcal{A}$ be a central $S$-ring over~$G$. Since $P^\#$ is a conjugacy class of~$G$, we have $\mathcal{A}=\mathcal{T}_P\wr \mathcal{A}_{G/P}$ by Proposition~\ref{3wreath}. The group $G/P$ is a cyclic group of order $p^k-1$. If
\begin{equation}\label{prime}
p^k\in\{2^6,2^7,2^8,2^9,2^{10},2^{11},3^4,3^5,5^2,5^3,7^2,7^3\},
\end{equation}
then $G/P$ is a Schur group by~\cite[Theorem~1.1]{EKP1}. Therefore $\mathcal{A}_{G/P}$ is schurian. In this case, the $S$-ring $\mathcal{A}$ is schurian by Lemma~\ref{schurwreath}. Thus, $G$ is a generalized Schur group whenever Eq.~\eqref{prime} holds. On the other hand, if Eq.~\eqref{prime} holds, then $P$ is not Schur and hence not generalized Schur by~\cite[Theorem~1.2]{EKP2}.

\section{Proof of Theorem~\ref{main3}}

Theorem~\ref{main3} immediately follows from~\cite[Theorem~1.1]{EKP1} and the following proposition.

\begin{prop}\label{criterion}
Let $n\geq 1$ be an integer. The dihedral group of order~$2n$ is generalized Schur if and only if the cyclic group of order~$n$ is Schur.
\end{prop}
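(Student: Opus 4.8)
The plan is to prove both implications by relating central $S$-rings over the dihedral group $D_{2n}$ to $S$-rings over the cyclic group $C_n$. Write $G = D_{2n} = \langle a, b \mid a^n = b^2 = e,\ b^{-1}ab = a^{-1}\rangle$, and let $C = \langle a \rangle \cong C_n$, a normal subgroup of index~$2$. The key structural observation is that conjugation in $G$ acts on $C$ by inversion (and by the inner automorphisms coming from $C$ itself, which are trivial since $C$ is abelian), so a subset $X \subseteq C$ is normal in $G$ if and only if $X = X^{-1}$; and every element of $G \setminus C$ is conjugate to its inverse as well, with the conjugacy classes outside $C$ being $\{a^i b, a^{-i}b\}$-type sets — in fact for $n$ odd there is a single class $Cb$, and for $n$ even there are two, $\{a^{2i}b\}$ and $\{a^{2i+1}b\}$. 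So a central $S$-ring over $G$ is quite constrained on the coset $Cb$.

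For the direction ``$C_n$ Schur $\Rightarrow D_{2n}$ generalized Schur'', I would take an arbitrary central $S$-ring $\cA$ over $G$ and first examine $\cA_C$, the restriction to the $\cA$-subgroup $C$ (note $C$ is an $\cA$-subgroup because it is a union of conjugacy classes — each $\{a^i, a^{-i}\}$ lies in $C$). Since $C_n$ is Schur, $\cA_C$ is a schurian $S$-ring over $C_n$. Next I would analyze the basic sets meeting $G \setminus C$: each such basic set $X$ is a union of conjugacy classes outside $C$, and because conjugation by $a$ sends $a^i b \mapsto a^{i+2}b$ while conjugation by $b$ inverts, one checks that $\rad(X)$ contains $\langle a^2 \rangle$, which is either all of $C$ (when $n$ is odd) or an index-$2$ subgroup of $C$ (when $n$ is even). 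In the odd case this forces $\cA = \cA_C \wr \cA_{G/C}$ with $|G/C| = 2$, so $\cA$ is schurian by Lemma~\ref{schurwreath} (using $\cA_C$ schurian and $\mathcal{T}_{G/C}$ schurian). In the even case one gets that $\cA$ is the $S$-wreath product over the section $C/\langle a^2\rangle$ of order~$2$, or a wreath product over $\langle a^2\rangle$; here I would try to invoke a schurity criterion for generalized wreath products with a section of size~$2$ — the dihedral analogue of Lemma~\ref{schurgenwreath}. This is the step I expect to be the main obstacle: Lemma~\ref{schurgenwreath} is stated for abelian $G$, and I will need either to reduce to that lemma by passing through $\cA_C$ and $\cA_{C'}$ for the other index-$2$ cyclic-containing subgroups, or to cite a more general $2$-isolation result from~\cite{MP2} valid for arbitrary groups. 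Alternatively, Lemma~\ref{rank3} handles the low-rank cases directly, and a careful case analysis on how the two outer classes distribute among basic sets may let me reduce everything to wreath/generalized-wreath products over the abelian section.

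For the converse, ``$D_{2n}$ generalized Schur $\Rightarrow C_n$ Schur'', I would start from an arbitrary (not necessarily schurian) $S$-ring $\cB$ over $C_n$ and build a central $S$-ring over $G$ whose non-schurity I can control. The natural candidate is $\cA = \cB \wr \mathcal{T}_{G/C}$: this is an $S$-ring over $G$ with $C$ as an $\cA$-subgroup, and its basic sets are the basic sets of $\cB$ (each of which, however, I must first replace by its symmetrization $X \cup X^{-1}$ to make it normal in $G$ — so really I should apply this to the \emph{symmetrization} $\cB^*$ of $\cB$, the largest $S$-ring below $\cB$ all of whose basic sets are inversion-closed) together with the single set $G \setminus C = Cb$. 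Every basic set of this $\cA$ is normal in $G$ (those inside $C$ because they are inversion-closed hence union of $G$-classes; and $Cb$ because it is a union of $G$-classes), so $\cA$ is central. By Lemma~\ref{schurwreath}, $\cA$ is schurian iff $\cB^*$ and $\mathcal{T}_{G/C}$ are, i.e. iff $\cB^*$ is schurian. Thus if $C_n$ is not Schur I would need to exhibit a \emph{symmetric} non-schurian $S$-ring over $C_n$. The point rescuing this is that the known non-schurian $S$-rings over cyclic groups — coming from the classification in~\cite{EKP1} — can be taken symmetric, or the inversion map is an automorphism of them; alternatively, I can argue abstractly that if $\cB$ is non-schurian then its symmetrization $\cB^*$ is non-schurian, since $\cB$ is recovered from $\cB^*$ by a rational-closure / multiplier argument (here the hypothesis that $|C_n|$ is odd or has small $2$-part lets $-1$ generate a group acting nicely, via Lemma~\ref{centralburn}-type multiplier considerations) — if $\cB^*$ were schurian, so would be $\cB = (\cB^*)$-with-$(-1)$-adjoined. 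I expect this symmetrization argument, and verifying it survives the $2$-part of $n$, to be the delicate point of the converse; the cleanest route is probably to show directly that for cyclic $n$ the classes of schurian and of ``schurian after symmetrization'' $S$-rings coincide, which follows from the fact that every $S$-ring over $C_n$ is, up to the known obstructions, determined by its symmetric part.
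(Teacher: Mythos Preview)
Your overall strategy matches the paper's, but there is a genuine gap in the ``if'' direction: your claim that $C=\langle a\rangle$ is always an $\mathcal{A}$-subgroup is false. Being a union of conjugacy classes is necessary for an $\mathcal{A}$-set in a central $S$-ring, but not sufficient; the trivial $S$-ring $\mathcal{T}_G$ is central and has $G^{\#}$ as a basic set straddling $C$ and $bC$, so $C$ is not an $\mathcal{A}$-subgroup there. Consequently your analysis of basic sets ``meeting $G\setminus C$'' as unions of classes \emph{outside} $C$ breaks down, and with it the radical computation and the claimed wreath/generalized-wreath decomposition. The paper repairs this by working instead with the maximal $\mathcal{A}$-subgroup $L\le A$ (which always exists: the union of basic sets contained in $A$ is a subgroup) and proving an auxiliary lemma: either $\mathcal{A}=\mathcal{A}_L\wr\mathcal{A}_{G/L}$ with $\rk(\mathcal{A}_{G/L})\le 3$, or $L=A$ and $\mathcal{A}$ is the $A/A_1$-wreath product. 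Ruling out the remaining subcase (two basic sets outside $L$ with equal images in $G/L$ while $L<A$) uses the multiplier Lemma~\ref{centralburn}, an ingredient your sketch does not invoke.

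You correctly flag the obstacle that Lemma~\ref{schurgenwreath} is stated only for abelian $G$. The paper's resolution is clean and worth noting: in the $A/A_1$-wreath-product case, apply Lemma~\ref{semidirect} to realize $\mathcal{A}$ as an $S$-ring over the abelian group $C_n\times C_2$, after which Lemma~\ref{schurgenwreath} applies directly. Your alternatives (a dihedral analogue, or passing through other index-$2$ subgroups) are unnecessary. For the converse direction, the paper does exactly what you propose first: it cites~\cite{EKP1} for the fact that when $C_n$ is non-Schur one can choose a non-schurian $S$-ring over $C_n$ all of whose basic sets satisfy $X=X^{-1}$, and then $\mathcal{A}\wr\mathcal{T}_{G/A}$ is central and non-schurian. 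Your abstract symmetrization argument is not needed and, as you suspect, would require care to make rigorous.
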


\begin{proof}
Let $n\geq 3$ and $G=\langle a, b:~a^n=b^2=e,~a^b=a^{-1}\rangle$. Put $A=\langle a \rangle$ and $B=\langle b \rangle$. Clearly, $G=A\rtimes B$, $A\cong C_n$, $B\cong C_2$, and $G$ is dihedral of order~$2n$. These notations are valid until the end of the section. The conjugacy classes of $G$ inside~$A$ are of the following form~$\{x,x^{-1}\}$, $x\in A$. 

At first, let us prove the ``only if'' part of the proposition. Suppose that $A\cong C_n$ is not Schur. Then due to~\cite[Theorem~1.1, Lemma~2.2]{EKP1}, the integer~$n$ satisfies the conditions of~\cite[Theorem~2.1]{EKP1}. So there exists a nonschurian $S$-ring $\mathcal{A}$ over~$A$ described in~\cite[pp.5-8, Theorem~2.1]{EKP1} such that $X=X^{-1}$ for every $X\in \mathcal{S}(\mathcal{A})$. Due to the latter property of $\mathcal{A}$, the $S$-ring $\mathcal{A}\wr \mathcal{T}_{G/A}$ is central. Since $\mathcal{A}$ is nonschurian, $\mathcal{A}\wr \mathcal{T}_{G/A}$ is nonschurian by Lemma~\ref{schurwreath}. Therefore, $G$ is not generalized Schur.

Now let us prove the ``if'' part of the theorem. If $n$ is odd, then $(G,A)$ is a Camina pair. Since $A$ and $G/A \cong C_2$ are Schur, $G$ is generalized Schur by Theorem~\ref{main2}. Further we assume that $n$ is even. Then the conjugacy classes of~$G$ outside~$A$ are $bA_1$ and $baA_1$, where $A_1$ is a subgroup of~$A$ of index~$2$.  

Let us prove that every central $S$-ring $\mathcal{A}$ over $G$ is schurian. Let $L$ be the maximal $\mathcal{A}$-subgroup of~$A$. 

\begin{lemm}\label{2wreath}
In the above notations, $\mathcal{A}=\mathcal{A}_L\wr \mathcal{A}_{G/L}$, where $\rk(\mathcal{A}_{G/L})\leq 3$, or $\mathcal{A}$ is the $A/A_1$-wreath product.
\end{lemm} 

\begin{proof}
Every basic set of $\mathcal{A}$ outside $L$ contains an element from $bA$ and hence all elements from at least one of the sets $bA_1$, $baA_1$. Therefore there are at most two basic sets of $\mathcal{A}$ outside $L$. If there is exactly one basic set $X$ of $\mathcal{A}$ outside $L$, then $X=G\setminus L$ and hence $\mathcal{A}=\mathcal{A}_L\wr\mathcal{T}_{G/L}$. 

Suppose that there are two distinct basic sets outside $L$, namely $X$ and $Y$. Clearly, $X^\pi,Y^\pi\in\mathcal{S}(\mathcal{A}_{G/L})$ and hence $X^\pi\cap Y^\pi=\varnothing$ or $X^\pi=Y^\pi$, where $\pi$ is the canonical epimorphism from $G$ to $G/L$. In the former case, $L\leq \rad(X)\cap\rad(Y)$ and, consequently, $\mathcal{A}=\mathcal{A}_L\wr \mathcal{A}_{G/L}$ and $\rk(\mathcal{A}_{G/L})=3$. 

In the latter case, $\rk(\mathcal{A}_{G/L})=2$. If $L=A$, then $\{X,Y\}=\{bA_1,baA_1\}$, $A_1=\rad(bA_1)$ is an $\mathcal{A}$-subgroup, and $\mathcal{A}$ is the $A/A_1$-wreath product. Suppose that $L<A$. Since $X^\pi=Y^\pi$, we conclude that both $X$ and $Y$ contain generators of $A$, say $x$ and $y$, respectively. It is easy to see that there exists a positive integer $m$ coprime to $|G|$ such that $y=x^m$. The set $X^{(m)}$ is a basic set of $\mathcal{A}$ by Lemma~\ref{centralburn} and $y\in X^{(m)}$. So $X^{(m)}=Y$. On the other hand, $X$ contain an element~$x^{\prime}$ of order~$2$ from $bA$. One can see that 
$$(x^{\prime})^m=x^{\prime}\in X^{(m)}\cap X=Y\cap X=\varnothing,$$
a contradiction.  
\end{proof}

At first, suppose that $\mathcal{A}=\mathcal{A}_L\wr \mathcal{A}_{G/L}$, where $\rk(\mathcal{A}_{G/L})\leq 3$. The $S$-ring $\mathcal{A}_L$ is schurian because $L\leq A$ and $A$ is Schur by the assumption of the proposition. The $S$-ring $\mathcal{A}_{G/L}$ is obviously schurian if $\rk(\mathcal{A}_{G/L})=2$ or $|G/L|=4$, and $\mathcal{A}_{G/L}$ is schurian by Lemma~\ref{rank3} otherwise. Thus, $\mathcal{A}$ is schurian by Lemma~\ref{schurwreath}.

Now suppose that $\mathcal{A}$ is the $A/A_1$-wreath product. The $S$-ring $\mathcal{A}_A$ is schurian by the assumption of the proposition. The $S$-ring $\mathcal{A}_{G/A_1}$ is schurian because $|G/A_1|=4$. From Lemma~\ref{semidirect} it follows that $\mathcal{A}$ is isomorphic to an $S$-ring over $C_n\times C_2$. Thus, $\mathcal{A}$ is schurian by Lemma~\ref{schurgenwreath}. 
\end{proof}

\begin{rem2}
If $n$ is odd and $C_n$ is a cyclic Schur group, then $C_n\times C_2$ is also Schur (see~\cite[Theorem~1.1]{EKP1}). Therefore Theorem~\ref{main3} for odd~$n$ also follows from Lemma~\ref{semidirect}.
\end{rem2}

\end{document}